
\documentclass[12pt, a4]{amsart}
\usepackage{amsgen,amsmath,amstext,amsbsy,amsopn,amsfonts,amssymb}
\usepackage{amsmath,amssymb,amsfonts,amsthm,enumerate}
\usepackage{amsthm}
\usepackage[utf8]{inputenc}
\usepackage{color}
\usepackage[pagebackref]{hyperref}
\usepackage[alphabetic]{amsrefs}
\usepackage{xcolor}
\usepackage{tikz-cd}

\usepackage{rotating}

\UseRawInputEncoding

\newtheorem{theorem}{Theorem}
\newtheorem{lemma}[theorem]{Lemma}

\newtheorem{proposition}[theorem]{Proposition}
\newtheorem{obs}[theorem]{Observation} \newtheorem{defi}[theorem]{Definition}

\newtheorem{exa}[theorem]{Example}

\newtheorem{rem}[theorem]{Remark}
\newenvironment{remark}{\begin{rem}\rm}{\end{rem}}
\newtheorem{rems}[theorem]{Remarks}

\newtheorem{ack}[theorem]{Acknowlegment}




\def\ZZ{{\mathbf Z}}
\def\CCC{{\mathbf C}}

\def\QQ{\mathbf Q}

\def\AA{{\mathbf A}}
\def\RR+{{\mathbf R}^*}

\def\Q_p{{\mathbf Q}_p}

\def\Proj{\rm Proj}

\def\eps{\varepsilon}
\def\Ga{\Gamma}

\def\la{\lambda}

\def\vfi{\varphi}

\def\Aut{{\rm Aut}}

\def\Ker{{\rm Ker}}

\newcommand{\supp}{\operatorname{supp}}

\newcommand{\Ind}{\operatorname{Ind}}

\newcommand{\Bohr}{\operatorname{Bohr}}
\newcommand{\Prof}{\operatorname{Prof}}

\newcommand{\Sym}{\operatorname{Sym}}

\def\tout{\qquad\text{for all}\quad}



%



%

%
\newtheorem{thmx}{Theorem}
 
\newtheorem{corx}[thmx]{Corollary}
\begin{document}

\title[ Bohr and profinite compactification of extensions]{On  Bohr compactifications and profinite completions of group extensions}

\address{Bachir Bekka \\ Univ Rennes \\ CNRS, IRMAR--UMR 6625\\
Campus Beaulieu\\ F-35042  Rennes Cedex\\
 France}
\email{bachir.bekka@univ-rennes1.fr}

\author{Bachir Bekka}

\thanks{The author acknowledges the support  by the ANR (French Agence Nationale de la Recherche)
through the project Labex Lebesgue (ANR-11-LABX-0020-01) .}
\begin{abstract}
Let $G= N\rtimes H$ be a  locally compact group which is a
semi-direct product of  a closed normal subgroup $N$ and a closed subgroup $H.$
The Bohr compactification $\Bohr(G)$  and the profinite completion   $\Prof(G)$ of $G$ 
are, respectively,   isomorphic to semi-direct products $Q_1 \rtimes \Bohr(H)$
and  $Q_2 \rtimes \Prof(H)$ for  appropriate quotients $Q_1$ of $\Bohr(N)$ and $Q_2$ 
of $\Prof(N).$ 
We give a precise description of $Q_1$ and $Q_2$ in terms of the action of $H$ on appropriate subsets
of the dual space of $N$. In the case where $N$ is abelian,  we have $\Bohr(G)\cong A \rtimes \Bohr(H)$
and  $\Prof(G)\cong B \rtimes \Prof(H),$ where  $A$
is the  group of unitary characters of $N$ with finite $H$-orbits   
 and $B$ the subgroup of $A$ of characters with finite image.
Necessary and sufficient conditions are deduced for $G$ to be maximally almost periodic
or residually finite.
We apply the results to the case where $G= \Lambda\wr H$ is a wreath product
of countable groups; we show in particular that $\Bohr(\Lambda\wr H)$ is isomorphic to  
$\Bohr(\Lambda^{\rm Ab}\wr H)$
and  $\Prof(\Lambda\wr H)$ is isomorphic to $\Prof(\Lambda^{\rm Ab} \wr H),$
where  $\Lambda^{\rm Ab}=\Lambda/ [\Lambda, \Lambda]$ is the abelianization of $\Lambda.$  
As examples, we compute    $\Bohr(G)$  and $\Prof(G)$ 
when  $G$ is a lamplighter group and when $G$ is the Heisenberg
group over a unital commutative ring.
\end{abstract}
\subjclass[2000]{22D10; 22D25; 22E50; 20G05}
\maketitle
\section{Introduction}
There are two distinguished compact groups associated to a general topological group $G.$  A    \textbf{Bohr compactification} 
(respectively, a \textbf{profinite completion}) of $G$ is  a pair  
consisting of a   compact  (respectively, profinite) group $K$ and a continuous homomorphism $\beta: G \to K $   with dense image  satisfying the following universal property: for every compact group (respectively, profinite group) $L$
and every continuous homomorphism $\alpha: G \to L$,
there exists a continuous homomorphism $\alpha':  K\to L$
such that the diagram 
\[
\begin{tikzcd}
& K\arrow[dashed]{d}{\alpha'} \\
G \arrow{ur}{\beta} \arrow{r}{\alpha} & L
\end{tikzcd}
\]
commutes.
Bohr compactifications and profinite completions   $(K, \beta) $ of $G$ are unique in the following sense:
if  $(K', \beta')$ is a pair consisting of a compact (respectively, profinite)  group $K'$
and a continuous homomorphism $\beta': G \to K'$ with dense image
satisfying the same universal property, then there exists an isomorphism $f: K \to K'$ of topological groups
such that $\beta' = f \circ \beta$.
Concerning  existence,  we give below (Proposition~\ref{Pro-General2}) models of  Bohr compactifications and profinite completions.
For more  on Bohr compactifications,  see  \cite[\S 16]{Dixm--C*}, \cite[4.C]{BH} or  \cite[Chap.VII]{Weil}; for more details on  profinite completions, see \cite{Ribes-Zal}.

We will often denote by $(\Bohr(G), \beta_G)$ and $(\Prof(G), \alpha_G)$
a Bohr compactification and a profinite completion of $G.$
In the sequel, for two topological groups $H$ and $L,$ we write $H\cong L$ if   $H$ and $L$ are topologically isomorphic.

The universal property of $\Bohr(G)$ gives rise to a continuous surjective homomorphism
$\alpha: \Bohr(G) \to \Prof(G)$ such that  $\alpha_G = \alpha \circ \beta_G$.
 It is easy to see (see \cite[Proposition 7]{Bekka1}) that the kernel of $\alpha$ is 
$\Bohr(G)_0$, the connected component of $\Bohr(G)$ and so
$$\Prof(G)\cong \Bohr(G)/ \Bohr(G)_0.$$

Every continuous homomorphism $G_1 \xrightarrow{f} G_2$ of topological groups
induces  continuous homomorphisms
$$\Bohr(G_1) \xrightarrow{\Bohr(f)} \Bohr(G_2) \qquad \text{and}\qquad\Prof(G_1) \xrightarrow{\Prof(f)} \Prof(G_2)$$ such that 
$\beta_{G_2} \circ f= \Bohr(f)\circ\beta_{G_1}$ and $\alpha_{G_2} \circ f= \Prof(f)\circ\alpha_{G_1}.$

Consider the category \textbf{TGrp} of  topological groups, with objects
the  topological groups and morphisms  the continuous homomorphisms between 
 topological groups.
 The Bohr compactification and the profinite completion   are covariant  functors 
 $$\Bohr: \textbf{TGrp} \to \textbf{CGrp} \qquad \text{and}\qquad \Prof:\textbf{TGrp} \to \textbf{PGrp}$$
 from  \textbf{TGrp} to the subcategory  \textbf{CGrp}
 of compact groups and the subcategory \textbf{PGrp} of profinite groups:
 
Assume that we are given an extension
\[
\begin{tikzcd} 
1\arrow{r}&N\arrow{r}{i}&G\arrow{r}{p}&G/N\arrow{r} &1. 
\end{tikzcd}
\leqno{(*)}
\]
of topological groups.
The functors $\Bohr$ and $\Prof$ are right exact and so the  diagrams
\[
\begin{tikzcd} 
\Bohr(N)\arrow{r}{\Bohr(i)}&\Bohr(G)\arrow{r}{\Bohr(p)}&\Bohr(G/N)\arrow{r} &1. 
\end{tikzcd}
\]
and 
\[
\begin{tikzcd} 
\Prof(N)\arrow{r}{\Prof(i)}&\Prof(G)\arrow{r}{\Prof(p)}&\Prof(G/N)\arrow{r} &1. 
\end{tikzcd}
\]
are exact; this  means that 
\begin{itemize}
\item $\Bohr(p)$ and $\Prof(p)$ are surjective and 
\item $\Ker (\Bohr(p))=\overline{\beta_G(N)}$   and $\Ker (\Prof(p))=\overline{\alpha_G(N)},$
\end{itemize}
where $\overline{A}$  denotes the closure of a subset $A$;
these facts are well-known and easy to prove (see, e.g., \cite[Lemma 2.2]{Hart-Kunen} and \cite[Proposition 3.2.5]{Ribes-Zal}; see also Proposition~\ref{Pro-Rightexact} below).
However, the functors  $\Bohr$ and $\Prof$ are not left  exact,
that is, $\Bohr(i): \Bohr(N) \to \Bohr(G)$ and $\Prof(i): \Prof(N) \to \Prof(G)$ are in general not injective (see e.g.
the examples given by Corollaries~\ref{Cor3} and \ref{Cor4} below).

For now on, we will deal only with \emph{locally compact} groups.
and with \emph{split} extensions. So, we will consider  
locally compact groups 
$G=N\rtimes H$ which are a semi-direct product of a normal closed subgroup $N$ and a closed subgroup $H$.
It is easy to see that $\Bohr(G)$, respectively $\Prof(G)$, is a  semi-direct product of 
$\overline{\beta_G(N)}$ with $\overline{\beta_G(H)}$, respectively of $\overline{\alpha_G(N)}$ with $\overline{\alpha_G(H)}$
(see \cite{Junghenn2}, \cite{Grunewald}). Our  results give a precise description 
of  the structure of these semi-direct products.

Denote by  $\widehat{N}_{ \rm fd}$  the  set of equivalence classes  (modulo unitary equivalence) of irreducible finite dimensional  unitary representations  of $N$. Every such representation $\sigma: N\to U(n)$ gives rise to  the
 unitary representation  $\Bohr({\sigma}):\Bohr(N)\to U(n) $   of $\Bohr(N)$;
 here (and elsewhere) we identify $\Bohr(U(n))$ with $U(n).$

  Observe that $H$ acts  on $ \widehat{N}_{\rm fd}$: for $\sigma\in \widehat{N}_{\rm fd}$ and $h\in H,$ 
the conjugate representation $\sigma^h \in \widehat{N}_{\rm fd}$
is defined by $\sigma^ h(n)=\sigma( h^{-1} n h)$ for all $n\in N.$

Define  $\widehat{N}_{ \rm fd}^{H-{\rm per}}$ as  the set of  
$\sigma\in \widehat{N}_{ \rm fd}$ with \emph{finite} $H$-orbit.

Observe that, due to the universal property of  $\Bohr(N)$, the group
  $H$ acts by automorphisms on  $\Bohr(N)$. However, this action does not extend
  in general to an action of $\Bohr(H)$ on $\Bohr(N).$
  
  Our first result show that $\Bohr(G)$ is a split extension of $\Bohr(H)$ by an appropriate quotient of $\Bohr(N).$
 
\begin{thmx}
\label{Theo1}
 Let $G=N\rtimes H$ be a semi-direct of locally compact groups.
Let $\vfi_N: \Bohr(N)\to \overline{\beta_G(N)}$ and $\vfi_H: \Bohr(H)\to \overline{\beta_G(H)}$ be the maps such that
$\vfi_N\circ \beta_N= \beta_G|_N$ and $\vfi_H\circ \beta_H= \beta_G|_H$
Set 
$$
C:=\bigcap_{\sigma \in \widehat{N}_{ \rm fd}^{H-{\rm per}}} \Ker (\Bohr(\sigma)).
$$
\begin{itemize}
\item[(i)] We have  $\Ker \vfi_N =C$ 
and so $\vfi_N$ induces a topological isomorphism $\overline{\vfi_N}: \Bohr(N)/C \to \overline{\beta_G(N)}.$
\item[(ii)] $\vfi_H: \Bohr(H)\to \overline{\beta_G(H)}$ is a topological isomorphism.
\item[(iii)] The action of $H$ by automorphisms on  $\Bohr(N)$ induces an action of 
$\Bohr(H)$ by automorphisms on  $\Bohr(N)/C$ and the maps $\overline{\vfi_N}$ and $\vfi_H$
give rise to an  isomorphism $$ \Bohr(G)\cong (\Bohr(N)/C) \rtimes \Bohr(H).$$
\end{itemize}
\end{thmx}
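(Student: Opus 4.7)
The plan is to establish (ii) first as an immediate consequence of the retraction $G \to H$, then prove the representation-theoretic identification (i), and finally deduce (iii) by combining (i), (ii) with the right exactness of $\Bohr$ recorded earlier in the introduction.

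For (ii), the projection $p\colon G \to H$, $(n,h) \mapsto h$, is a continuous retraction of the inclusion $H \hookrightarrow G$. Functoriality gives $\Bohr(p) \circ \beta_G|_H = \beta_H$, whence $\Bohr(p)|_{\overline{\beta_G(H)}} \circ \vfi_H \circ \beta_H = \beta_H$. Density of $\beta_H(H)$ in $\Bohr(H)$ forces $\Bohr(p)|_{\overline{\beta_G(H)}} \circ \vfi_H = \mathrm{id}_{\Bohr(H)}$, so $\vfi_H$ is injective; its image is compact and contains $\beta_G(H)$, hence equals $\overline{\beta_G(H)}$, and $\vfi_H$ is a topological isomorphism.

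For (i), since the compact group $\overline{\beta_G(N)}$ is separated by its finite-dimensional unitary representations, and every such representation pulls back along $\vfi_N$ to one of the form $\Bohr(\sigma)$ for a unique $\sigma \in \widehat{N}_{\rm fd}$ (using density of $\beta_N(N)$ in $\Bohr(N)$), one has
$$
\Ker \vfi_N \;=\; \bigcap_{\sigma \in S} \Ker \Bohr(\sigma), \qquad S := \{\sigma \in \widehat{N}_{\rm fd} : \sigma \text{ factors through } \beta_G|_N\}.
$$
It thus suffices to prove $S = \widehat{N}_{\rm fd}^{H-{\rm per}}$. For $S \subseteq \widehat{N}_{\rm fd}^{H-{\rm per}}$: write $\sigma = \tau \circ \beta_G|_N$ for $\tau$ an irreducible representation of $\overline{\beta_G(N)}$; Frobenius reciprocity in the compact group $\Bohr(G)$, applied to its closed normal subgroup $\overline{\beta_G(N)}$, gives an irreducible representation $\pi$ of $\Bohr(G)$ whose restriction to $\overline{\beta_G(N)}$ contains $\tau$, so $\pi \circ \beta_G$ is a finite-dimensional representation of $G$ whose restriction to $N$ contains $\sigma$, and Clifford's theorem forces the $H$-orbit of $\sigma$ to be finite. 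The reverse inclusion $\widehat{N}_{\rm fd}^{H-{\rm per}} \subseteq S$ is the main obstacle: given $\sigma$ with finite $H$-orbit, one must construct a finite-dimensional representation of $G$ whose restriction to $N$ contains $\sigma$, for then that representation factors through $\beta_G$ and $\sigma$ factors through $\beta_G|_N$. The standard route is the Mackey machine applied to the stabilizer $H_\sigma$ of $[\sigma]$: extend $\sigma$ (projectively, in general) to $N \rtimes H_\sigma$, resolve the Schur-multiplier cocycle through a suitable central extension, and induce up to $G$; the resulting representation has dimension $[H : H_\sigma] \cdot \dim \sigma$.

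For (iii), right exactness of $\Bohr$ yields the exact sequence $\overline{\beta_G(N)} \hookrightarrow \Bohr(G) \xrightarrow{\Bohr(p)} \Bohr(H) \to 1$ with $\overline{\beta_G(N)}$ closed and normal. By (ii), $\Bohr(p)$ restricts to an isomorphism $\overline{\beta_G(H)} \to \Bohr(H)$, so $\overline{\beta_G(H)}$ is a closed complement of $\overline{\beta_G(N)}$; the product $\overline{\beta_G(N)} \cdot \overline{\beta_G(H)}$ is compact and contains $\beta_G(G) = \beta_G(N)\beta_G(H)$, which is dense, so it equals $\Bohr(G)$. Thus $\Bohr(G) = \overline{\beta_G(N)} \rtimes \overline{\beta_G(H)}$ as an internal semi-direct product, and transporting this decomposition along the isomorphisms $\overline{\vfi_N}$ (from (i)) and $\vfi_H$ (from (ii)) gives the announced $\Bohr(G) \cong (\Bohr(N)/C) \rtimes \Bohr(H)$; the $\Bohr(H)$-action on $\Bohr(N)/C$ obtained in this way is, by continuity and density, the unique extension of the original $H$-action.
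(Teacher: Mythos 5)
Your steps (ii) and (iii), and the reduction in (i) of $\Ker \vfi_N$ to $\bigcap_{\sigma\in S}\Ker\Bohr(\sigma)$ together with the inclusion $S\subseteq \widehat{N}_{\rm fd}^{H-{\rm per}}$ via Frobenius reciprocity and Clifford theory, all match the paper's argument (its fourth, fifth, third and first steps, respectively). The gap is exactly where you locate ``the main obstacle'', namely $\widehat{N}_{\rm fd}^{H-{\rm per}}\subseteq S$, and the route you sketch there does not work. Resolving the Mackey obstruction cocycle $c$ ``through a suitable central extension'' produces an ordinary representation of a central extension of $G_\sigma=N\rtimes H_\sigma$ on which the central subgroup acts by a nontrivial character; such a representation does not descend to $G_\sigma$, and inducing it yields a representation of a cover of $G$, not of $G$ itself. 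Since $c$ need not be a coboundary on $H_\sigma$, there is in general no ordinary representation of $G_\sigma$ of dimension $\dim\sigma$ extending $\sigma$, and the dimension count $[H:H_\sigma]\cdot\dim\sigma$ you announce cannot be achieved. This inclusion is the heart of the theorem, so as written the proof is incomplete.

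The device the paper uses instead (second step of its proof) is the following: take the projective extension $\widetilde\sigma$ of $\sigma$ to $G_\sigma$ whose cocycle $c$ is lifted from $H_\sigma\times H_\sigma$ (Proposition~\ref{Pro-Ext}), and tensor it with the projective representation $\tau(nh)=\overline{\widetilde\sigma}(h)$, which is trivial on $N$ and has cocycle $\overline{c}$. By Lemma~\ref{Lem-TensorConj}, $\widetilde\sigma\otimes\tau$ has cocycle $c\overline{c}=1$, hence is an ordinary (measurable, therefore continuous) finite-dimensional unitary representation of $G_\sigma$ whose restriction to $N$ is $(\dim\sigma)\cdot\sigma$. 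Inducing to $G$ gives a finite-dimensional representation of $G$, of dimension $[G:G_\sigma](\dim\sigma)^2$, containing $\sigma$ upon restriction to $N$; this representation factors through $\beta_G$, so $\sigma$ factors through $\beta_G|_N$ as required. If you replace your central-extension step by this tensor trick, the rest of your argument goes through.
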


We turn to the description of $\Prof(G).$  Let $\widehat{N}_{ \rm finite} $ be the set    of irreducible 
 unitary representations $\sigma$ of $N$ with finite image $\sigma(N).$
  Observe that the action of $H$ on  $ \widehat{N}_{\rm fd}$ preserves $\widehat{N}_{ \rm finite}.$
  Let  $\widehat{N}_{ \rm finite}^{H-{\rm per}}$ be the subset
of $\widehat{N}_{ \rm finite}$ of representations with finite $H$-orbit.
Every  $\sigma\in \widehat{N}_{ \rm finite}$ gives rise to  the
 unitary representation  $\Prof({\sigma}) $   of $\Prof(N)$.

 A   result  completely similar to Theorem~\ref{Theo1} holds for  $\Prof(G)$.
 \begin{thmx}
\label{Theo2}
Let  $G=N\rtimes H$ be a semi-direct of locally compact groups.
Let $\psi_N: \Prof(N)\to \overline{\alpha_G(N)}$ and $\psi_H: \Prof(H)\to \overline{\alpha_G(H)}$ be the maps such that
$\psi_N\circ \alpha_N= \alpha_G|_N$ and $\psi_H\circ \alpha_H= \alpha_G|_H$
Set 
$$
D:=\bigcap_{\sigma \in \widehat{N}_{ \rm finite}^{H-{\rm per}}} \Ker (\Prof({\sigma})).
$$
\begin{itemize}
\item[(i)] We have  $\Ker \psi_N =D$ 
and so $\psi_N$ induces a topological isomorphism $\overline{\psi_N}: \Prof(N)/D \to \overline{\alpha_G(N)}.$
\item[(ii)] $\psi_H: \Prof(H)\to \overline{\alpha_G(H)}$ is a topological isomorphism.
\item[(iii)] The action of $H$ by automorphisms on  $\Prof(N)$ induces an action of 
$\Prof(H)$ by automorphisms on  $\Prof(N)/D$ and the maps $\overline{\psi_N}$ and $\psi_H$
give rise to an  isomorphism $$\Prof(G) \cong (\Prof(N)/D) \rtimes \Prof(H) .$$
\end{itemize}
\end{thmx}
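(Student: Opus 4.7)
The plan is to follow the proof of Theorem~\ref{Theo1} nearly verbatim, substituting \emph{finite-image} for \emph{finite-dimensional} irreducible unitary representations throughout. This substitution is legitimate because $\Prof(N)$ is determined by continuous homomorphisms $N\to F$ with $F$ finite, and every such homomorphism decomposes, via a faithful embedding $F\hookrightarrow U(|F|)$, into irreducibles lying in $\widehat{N}_{\rm finite}$.

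Parts (ii) and (iii) are comparatively formal. For Part~(ii), the projection $p\colon G \to H$ splits the inclusion $H\hookrightarrow G$, and functoriality of $\Prof$ gives $\Prof(p)\circ\psi_H = \mathrm{id}_{\Prof(H)}$, so $\psi_H$ is injective and hence a topological isomorphism. Part~(iii) follows from Parts~(i) and~(ii): $\widehat{N}_{\rm finite}^{H-{\rm per}}$ is $H$-stable, so $D$ is an $H$-invariant closed normal subgroup of $\Prof(N)$; the resulting $H$-action on $\Prof(N)/D\cong\overline{\alpha_G(N)}$ then agrees with the restriction of the inner action of $\Prof(H)\cong\overline{\alpha_G(H)}$ inside the known semi-direct decomposition $\Prof(G)=\overline{\alpha_G(N)}\rtimes\overline{\alpha_G(H)}$ recalled in the introduction.

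The substance is Part~(i). For $\Ker\psi_N\subseteq D$: any continuous homomorphism $\rho$ from the profinite group $\overline{\alpha_G(N)}$ to a finite group, composed with $\alpha_G|_N$, yields a finite-image representation of $N$ whose irreducible components lie in $\widehat{N}_{\rm finite}$; since $\overline{\alpha_G(N)}$ is normal in $\Prof(G)$, the conjugation action of the compact group $\overline{\alpha_G(H)}$ on the finite set of irreducibles of the finite quotient $\overline{\alpha_G(N)}/\Ker\rho$ has open stabilizers, and pulling back via $\alpha_G|_H$ places each such component in $\widehat{N}_{\rm finite}^{H-{\rm per}}$. The opposite inclusion $D\subseteq\Ker\psi_N$ requires construction: given $\sigma\in\widehat{N}_{\rm finite}^{H-{\rm per}}$ with stabilizer $H_0 := \mathrm{Stab}_H(\sigma)$, the equivalence $\sigma^h\cong\sigma$ for $h\in H_0$ forces $hKh^{-1}=K$ where $K := \Ker\sigma$, so $K$ is $H_0$-invariant; the action of $H_0$ on the finite group $F := N/K$ factors through the finite group $\mathrm{Aut}(F)$ and thus has a finite-index kernel $H_1\trianglelefteq H_0$. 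On $N\rtimes H_1$ the representation $\sigma$ extends via the product decomposition $(N/K)\rtimes H_1\cong F\times H_1$ to a finite-image representation $\tilde\sigma$; inducing from the finite-index subgroup $N\rtimes H_1$ up to $G$ produces a finite-dimensional representation $\pi$ of $G$ whose kernel contains the (still-finite-index) normal core of $\Ker\tilde\sigma$, so $\pi$ has finite image, and by Mackey $\pi|_N=\bigoplus_{h\in H/H_1}\sigma^h$ contains $\sigma$. Since $\pi$ factors through $\alpha_G$, one obtains $\Ker\psi_N\subseteq\Ker\Prof(\sigma)$.

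The main obstacle I anticipate is ensuring that the induced representation $\pi$ has \emph{finite image}, not merely finite dimension: this is the one step where the proof is not a formal transcription of Theorem~\ref{Theo1}, and it is precisely the passage to the finite-index subgroup $H_1\trianglelefteq H_0$ (trivialising the $H_0$-action on the finite group $F$) that makes $\Ker\tilde\sigma$ of finite index in $G$ and forces the induced representation to factor through a finite quotient.
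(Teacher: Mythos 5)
Your proposal is correct in substance, but the heart of it --- the construction showing that every $\sigma\in\widehat{N}_{ \rm finite}^{H-{\rm per}}$ arises from a representation of $\overline{\alpha_G(N)}$ --- takes a genuinely different route from the paper. The paper reuses the Clifford--Mackey machinery of Theorem~\ref{Theo1}: it extends $\sigma$ to a \emph{projective} representation $\widetilde\sigma$ of $G_\sigma$, renormalizes determinants so that $\widetilde\sigma_1(H_\sigma)\subset SU(m)$, shows via the map $\widetilde\sigma_1(H_\sigma)\to\Sym(\sigma(N))$ (whose fibers are finite because the scalar ambiguity is forced to be an $m$-th root of unity) that $\widetilde\sigma_1(H_\sigma)$ is finite, and only then tensors with the conjugate to kill the cocycle before inducing. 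You instead exploit finiteness of the image at the level of kernels: $K=\Ker\sigma$ is open of finite index in $N$, it is preserved by the stabilizer $H_0$ because equivalent representations have equal kernels, and the $H_0$-action on $N/K$ factors through the finite group $\mathrm{Aut}(N/K)$, so on the finite-index subgroup $N\rtimes H_1$ the representation $\sigma$ extends to an \emph{ordinary} representation $\tilde\sigma$, trivial on $H_1$; inducing up to $G$ then gives a finite-image representation containing $\sigma$ on restriction to $N$, since the normal core of the finite-index subgroup $\Ker\tilde\sigma$ has finite index. This bypasses projective representations entirely and is arguably cleaner for Theorem~\ref{Theo2} specifically; its cost is that it cannot be recycled for Theorem~\ref{Theo1}, where $N/\Ker\sigma$ need not be finite and the projective extension is unavoidable. (Both your argument and the paper's tacitly use that $G_\sigma$, respectively $N\rtimes H_1$, is \emph{open} of finite index so that induction produces a continuous finite-dimensional representation; this is automatic for discrete groups.) Your easy direction is also serviceable, though the paper's version --- take an irreducible constituent $\widehat\rho$ of $\Ind_{L}^{\Prof(G)}\widehat\sigma$, which is finite dimensional by compactness, and read off the finite $H$-orbit from the isotypic decomposition of $(\widehat\rho\circ\alpha_G)|_N$ --- avoids having to justify that the stabilizer of $\widehat\sigma$ in $\overline{\alpha_G(H)}$ is open.

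One small slip: the two inclusions in Part (i) are labelled backwards. Showing that every $\widehat\sigma\circ(\alpha_G|_N)$ with $\widehat\sigma\in\widehat{L}$ lies in $\widehat{N}_{ \rm finite}^{H-{\rm per}}$ yields $D\subseteq\Ker\psi_N$, because the intersection defining $D$ then runs over a larger family of kernels; conversely your construction yields $\Ker\psi_N\subseteq\Ker(\Prof(\sigma))$ for each $\sigma$, i.e.\ $\Ker\psi_N\subseteq D$ --- exactly as your own closing sentence states, contradicting that paragraph's heading. The mathematics is complete; only the labels need to be exchanged.
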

When $N$ is a finitely generated (discrete) group,  we obtain the following well-known result (see \cite[Proposition 2.6]{Grunewald}).

 \begin{corx}
\label{Cor0}
Assume that  $N$ is  finitely generated. Then  $\Prof(G) \cong \Prof(N) \rtimes \Prof(H) .$
\end{corx}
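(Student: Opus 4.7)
The plan is to invoke Theorem~\ref{Theo2}: it identifies $\Prof(G)$ with $(\Prof(N)/D)\rtimes \Prof(H)$, so it suffices to prove that $D=\{1\}$ in $\Prof(N)$ whenever $N$ is finitely generated. Equivalently, I must show that the collection $\{\Prof(\sigma):\sigma\in \widehat{N}_{\rm finite}^{H-{\rm per}}\}$ separates points of $\Prof(N)$.

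The key input is the classical fact that a finitely generated group has only finitely many subgroups of each given finite index. This immediately yields the crucial construction: for any finite-index normal subgroup $K$ of $N$, the conjugates $\{hKh^{-1}:h\in H\}$ all have the same index, so this $H$-orbit is finite, and hence
\[
K':=\bigcap_{h\in H} hKh^{-1}
\]
is an $H$-invariant, finite-index, normal subgroup of $N$ contained in $K$. Thus the $H$-invariant finite-index normal subgroups are cofinal among all finite-index normal subgroups of $N$.

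Now given a nontrivial $x\in \Prof(N)$, since $\Prof(N)=\varprojlim N/K$ over finite-index normal subgroups $K$, there is some such $K$ on which the image of $x$ is nontrivial; pass to an $H$-invariant $K'\subseteq K$ as above, so the image $\bar x$ of $x$ in the finite quotient $N/K'$ is still nontrivial. The finite group $N/K'$ carries an $H$-action by automorphisms, so $H$ permutes its (finitely many) irreducible representations. By Peter--Weyl for finite groups, the irreducibles of $N/K'$ separate its points, so some irreducible $\sigma$ of $N/K'$, viewed as a representation of $N$, satisfies $\sigma(\bar x)\neq 1$. This $\sigma$ lies in $\widehat{N}_{\rm finite}$ (its image is contained in the finite group of unitary matrices acting on $\sigma$), and its $H$-orbit is finite (being contained in the finite set of irreducibles of $N/K'$); hence $\sigma\in \widehat{N}_{\rm finite}^{H-{\rm per}}$, and $\Prof(\sigma)(x)\neq 1$. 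Thus $x\notin D$, proving $D=\{1\}$ and the corollary.

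I do not anticipate any real obstacle here: Theorem~\ref{Theo2} does all the heavy lifting, and the only point worth checking carefully is the cofinality of $H$-invariant finite-index subgroups, which rests entirely on the finiteness of subgroups of a given index in a finitely generated group.
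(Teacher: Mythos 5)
Your proof is correct and rests on exactly the same key lemma as the paper's: a finitely generated group has only finitely many subgroups of each finite index, which yields a cofinal family of $H$-invariant finite-index normal subgroups. The paper packages this slightly differently — it proves the (marginally stronger) equality $\widehat{N}_{\rm finite}^{H\text{-per}}=\widehat{N}_{\rm finite}$ by intersecting all subgroups of index $|\sigma(N)|$, whereas you intersect the $H$-conjugates of a single kernel and argue separation of points in $\Prof(N)$ directly — but the two arguments are essentially identical.
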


In the case where $N$ is abelian, we can give a more  explicit description of the quotients $\Bohr(N)/C$ and $\Prof(N)/D$
appearing in Theorems~\ref{Theo1} and ~\ref{Theo2}.
Recall that, in this case, the dual group $\widehat{N}$ is the group  
of continuous homomorphisms from $N$ to the circle group   $\mathbf{S}^1$.
We will also consider the subgroup $\widehat{N}_{\rm fin}$ of $\chi\in \widehat{A}$
with finite image $\chi(N)$, that is, with values in the subgroup  of $m$-the roots of unity in $\mathbf{C}$
for some integer $m\geq 1.$
Observe also that $\widehat{N}^{H-{\rm per}}$ and $\widehat{N}_{ \rm finite}^{H-{\rm per}}$
are subgroups of $\widehat{N}$.
\begin{corx}
\label{Cor1}
Assume that  $N$ is an \emph{abelian} locally compact  group. 
Let  $\widehat{N}^{H-{\rm per}}$ and $\widehat{N}_{ \rm finite}^{H-{\rm per}}$
be equipped with the discrete topology.
Let $A$ and $B$ be their respective dual groups. Then 
$$\Bohr(G) \cong A \rtimes \Bohr(H)  \qquad\text{and} \qquad \Prof(G) \cong B \rtimes \Prof(H) .$$
\end{corx}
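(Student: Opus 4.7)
The plan is to reduce to Theorems~\ref{Theo1} and~\ref{Theo2}, and then use Pontryagin duality to identify the quotients $\Bohr(N)/C$ and $\Prof(N)/D$ appearing there as the groups $A$ and $B$.

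First, since $N$ is abelian, every irreducible unitary representation of $N$ is one-dimensional, so $\widehat{N}_{\rm fd}=\widehat{N}$ and hence $\widehat{N}_{\rm fd}^{H-{\rm per}}=\widehat{N}^{H-{\rm per}}$; similarly $\widehat{N}_{\rm finite}^{H-{\rm per}}$ agrees with the $H$-periodic elements of $\widehat{N}_{\rm fin}$. Next, I would record the classical model of the Bohr compactification of an abelian locally compact group: $\Bohr(N)$ is the Pontryagin dual of $\widehat N$ equipped with the discrete topology, and the map $\Bohr(\chi)\colon \Bohr(N)\to \mathbf S^1$ associated with $\chi\in\widehat N$ is, under this identification, the evaluation character $x\mapsto x(\chi)$. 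For $\Prof(N)$, I would combine the identity $\Prof(N)\cong\Bohr(N)/\Bohr(N)_0$ recalled in the introduction with the standard fact that, for a compact abelian group $K$, the dual of $K/K_0$ is the torsion subgroup of $\widehat K$; applied to $K=\Bohr(N)$ whose dual is $\widehat N$ with the discrete topology, this gives $\Prof(N)\cong \widehat{\widehat N_{\rm fin}}$, with $\Prof(\chi)$ again realized as evaluation at $\chi$.

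With these identifications in hand, the definitions of $C$ and $D$ become transparent:
\[
C=\{x\in\widehat{\widehat N_d}\mid x(\chi)=1\text{ for all }\chi\in\widehat{N}^{H-{\rm per}}\}
\]
is exactly the annihilator in $\Bohr(N)$ of the subgroup $L:=\widehat{N}^{H-{\rm per}}$ of $\widehat N_d$, and similarly $D$ is the annihilator of $L':=\widehat{N}_{\rm finite}^{H-{\rm per}}$ inside $\Prof(N)=\widehat{\widehat N_{\rm fin}}$. The duality theorem for closed subgroups of a locally compact abelian group, namely $\widehat G/L^\perp\cong\widehat L$ with $G$ discrete here (so every subgroup is closed), then yields topological group isomorphisms $\Bohr(N)/C\cong \widehat L=A$ and $\Prof(N)/D\cong \widehat{L'}=B$.

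Substituting these isomorphisms into the semi-direct product decompositions provided by Theorems~\ref{Theo1}(iii) and~\ref{Theo2}(iii) gives $\Bohr(G)\cong A\rtimes \Bohr(H)$ and $\Prof(G)\cong B\rtimes\Prof(H)$. The main point requiring care is to confirm that, once $\Bohr(N)/C$ is identified with $A$ via Pontryagin duality, the induced action of $\Bohr(H)$ from Theorem~\ref{Theo1}(iii) agrees with the action obtained by transposing the natural $H$-action on the discrete group $\widehat{N}^{H-{\rm per}}$; this is a routine check using the fact that the $H$-action on $\Bohr(N)$ is the unique continuous extension of the $H$-action on $N$, together with naturality of Pontryagin duality, and the same verification applies for $B$ and $\Prof(H)$.
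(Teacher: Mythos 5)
Your proposal is correct and follows essentially the same route as the paper: identify the duals of $\Bohr(N)$ and $\Prof(N)$ with $\widehat N$ and $\widehat N_{\rm fin}$ as discrete groups, recognize $C$ and $D$ as the annihilators of $\widehat{N}^{H-{\rm per}}$ and $\widehat{N}_{\rm finite}^{H-{\rm per}}$, and apply Pontryagin duality for quotients together with Theorems~\ref{Theo1} and~\ref{Theo2}. Your write-up merely supplies more detail (the torsion-subgroup description of $\widehat{\Prof(N)}$ and the equivariance check) than the paper's terse argument.
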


Recall that  $G$  is \textbf{maximally almost periodic}, or \textbf{MAP},
if  $\widehat{G}_{ \rm fd}$ separates its points (equivalently, if 
$\beta_G: G\to \Bohr(G)$ is injective); recall also that $G$ is \textbf{residually finite},
or \textbf{RF},
if  $\widehat{G}_{ \rm finite}$ separate its points (equivalently, if 
$\alpha_G: G\to \Prof(G)$ is injective).

\begin{corx}
\label{Cor2} 
Let $G=N\rtimes H$ be a semi-direct of locally compact groups.
\begin{itemize}
\item[(i)] $G$ is MAP if and only if $H$ is MAP and $\widehat{N}_{ \rm fd}^{H-{\rm per}}$
separates the points of $N.$
\item[(ii)] $G$ is RF  if and only if $H$ is RF and $\widehat{N}_{ \rm finite}^{H-{\rm per}}$
separates the points of $N.$
\end{itemize}
\end{corx}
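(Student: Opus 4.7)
The plan is to deduce both parts of the Corollary directly from Theorems~\ref{Theo1} and~\ref{Theo2}, together with the definitions of MAP and RF. The two parts are strictly parallel, so I will carry out part (i) in detail; part (ii) then follows verbatim with $\Bohr$ replaced by $\Prof$ and $\widehat{N}_{ \rm fd}^{H-{\rm per}}$ replaced by $\widehat{N}_{ \rm finite}^{H-{\rm per}}$.

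For (i), recall that by definition $G$ is MAP if and only if $\beta_G$ is injective. The key observation from Theorem~\ref{Theo1} is that the isomorphism $\Bohr(G) \cong (\Bohr(N)/C) \rtimes \Bohr(H)$ respects the semi-direct product decomposition: under it the restriction $\beta_G|_N$ corresponds to the composition $N \xrightarrow{\beta_N} \Bohr(N) \twoheadrightarrow \Bohr(N)/C$, while $\beta_G|_H$ corresponds to $\beta_H$ (since $\varphi_H$ is a topological isomorphism by Theorem~\ref{Theo1}(ii)). Writing an arbitrary element of $G$ uniquely as $nh$ with $n\in N$ and $h\in H$, we therefore have $\beta_G(nh)=e$ if and only if $\beta_N(n)\in C$ \emph{and} $\beta_H(h)=e$. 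Consequently $\Ker\beta_G=\{e\}$ if and only if $\beta_H$ is injective and the composition $N\to \Bohr(N)/C$ is injective.

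The first condition amounts, by definition, to $H$ being MAP. For the second, I would invoke the identity $\Bohr(\sigma)\circ\beta_N=\sigma$ (the defining property of $\Bohr(\sigma)$) for each $\sigma\in\widehat{N}_{ \rm fd}^{H-{\rm per}}$; this yields
\[
\{n\in N : \beta_N(n)\in C\} \;=\; \bigcap_{\sigma\in\widehat{N}_{ \rm fd}^{H-{\rm per}}} \Ker(\sigma),
\]
and the right-hand side is trivial precisely when $\widehat{N}_{ \rm fd}^{H-{\rm per}}$ separates the points of $N$. This completes part (i); part (ii) is obtained by the same argument applied to Theorem~\ref{Theo2}, using $\Prof(\sigma)\circ\alpha_N=\sigma$ for $\sigma\in\widehat{N}_{ \rm finite}$.

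I do not expect a genuine obstacle: the argument is essentially bookkeeping from Theorems~\ref{Theo1} and~\ref{Theo2}. The only point deserving mild care is the semi-direct product compatibility used at the start, namely that $\beta_G$ sends $N$ into the first factor of the semi-direct product $\overline{\beta_G(N)}\rtimes\overline{\beta_G(H)}$ and $H$ into the second, so that $\Ker\beta_G$ decomposes as the product of its intersections with $N$ and $H$; this is immediate from the structure described in Theorem~\ref{Theo1}(iii) (and, analogously, Theorem~\ref{Theo2}(iii)).
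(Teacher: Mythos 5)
Your proposal is correct and follows essentially the same route as the paper: both arguments reduce injectivity of $\beta_G$ (resp.\ $\alpha_G$) to injectivity of $\beta_H$ and of $N\to\Bohr(N)/C$ via the semi-direct product decomposition of Theorem~\ref{Theo1} (resp.\ Theorem~\ref{Theo2}), and then identify $\beta_N^{-1}(C)$ with $\bigcap_{\sigma\in\widehat{N}_{\rm fd}^{H-{\rm per}}}\Ker(\sigma)$ using $\Bohr(\sigma)\circ\beta_N=\sigma$. No gaps.
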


We give an application of  our results to  the case where 
$G= \Lambda\wr H$ is the \textbf{wreath product} of the countable groups  $\Lambda$ and $H.$
Recall that $G$ is  the semidirect product $(\oplus_{h\in H}\Lambda)\rtimes H,$
where $H$ acts on $\oplus_{h\in H}\Lambda$ by shifting the indices.

We denote by $\Lambda^{\rm Ab}$ the abelianization $\Lambda/[\Lambda, \Lambda]$ of $\Lambda.$
Observe that $\Lambda^{\rm Ab}\wr H$ is a quotient of $\Lambda\wr H$

\begin{corx}
\label{Cor3}
Let $\Lambda$ and $H$ be countable groups.
\begin{itemize}
\item[(i)] When $H$ is finite, we have
\[\begin{aligned}
&\Bohr(\Lambda\wr H)\cong \left( \oplus_{h\in H}\Bohr(\Lambda)\right) \rtimes \Bohr(H) \ \text{and}\\
&\Prof(\Lambda\wr H)\cong \left( \oplus_{h\in H}\Prof(\Lambda)\right) \rtimes \Prof(H).
\end{aligned}
\]
\item[(ii)] When $H$ is infinite, 
the quotient map $\Lambda\wr H\to \Lambda^{\rm Ab}\wr H$ induces isomorphisms
$$
\Bohr(\Lambda\wr H)\cong \Bohr( \Lambda^{\rm Ab}\wr H) \ \text{and}\ \Prof(\Lambda\wr H)\cong \Prof( \Lambda^{\rm Ab}\wr H)
$$
In particular, if $\Lambda$ is perfect (that is, $\Lambda=[\Lambda, \Lambda]$), the quotient map $\Lambda\wr H\to  H$
induces  isomorphisms
$$
\Bohr(\Lambda\wr H)\cong \Bohr(H) \ \text{and}\ \Prof(\Lambda\wr H)\cong \Prof( H).
$$
\end{itemize}
\end{corx}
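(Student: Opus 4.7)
The plan is to apply Theorems~\ref{Theo1} and~\ref{Theo2} to the semidirect product $N \rtimes H$ with $N = \oplus_{h \in H} \Lambda$, analyzing the distinguished sets $\widehat{N}_{\rm fd}^{H-{\rm per}}$ and $\widehat{N}_{\rm finite}^{H-{\rm per}}$. For part (i), when $H$ is finite every $H$-orbit on $\widehat{N}_{\rm fd}$ is automatically finite, so $\widehat{N}_{\rm fd}^{H-{\rm per}} = \widehat{N}_{\rm fd}$ and $\widehat{N}_{\rm finite}^{H-{\rm per}} = \widehat{N}_{\rm finite}$. Since finite-dimensional, respectively finite-image, irreducible representations separate points of $\Bohr(N)$, respectively $\Prof(N)$, the subgroups $C$ and $D$ of Theorems~\ref{Theo1} and~\ref{Theo2} are trivial; moreover $\Bohr$ and $\Prof$ commute with \emph{finite} direct products (a direct consequence of their universal properties), so $\Bohr(N) \cong \oplus_{h \in H} \Bohr(\Lambda)$ and $\Prof(N) \cong \oplus_{h \in H} \Prof(\Lambda)$, from which the claimed decompositions in (i) follow at once.

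For part (ii), the key structural claim is that when $H$ is infinite, every $\sigma \in \widehat{N}_{\rm fd}^{H-{\rm per}}$ is one-dimensional, and hence factors through the abelianization $N^{\rm Ab} = \oplus_{h \in H} \Lambda^{\rm Ab}$. First, I would show that any $\sigma \in \widehat{N}_{\rm fd}$ admits a factorization $\sigma \cong \boxtimes_{h \in H} \rho_h$ with $\rho_h \in \widehat{\Lambda}_{\rm fd}$ and $\dim \rho_h = 1$ for all but finitely many $h$: for each $h_0 \in H$, the decomposition $N = \Lambda_{h_0} \times \oplus_{h \neq h_0} \Lambda_h$ into commuting subgroups, together with an isotypic-component argument via Schur's lemma applied to the closures of the images inside the compact group $U(V_\sigma)$, yields a tensor factorization $\sigma \cong \rho_{h_0} \boxtimes \tau_{h_0}$; iterating over any finite subset $F \subset H$ gives $\prod_{h \in F} \dim \rho_h \leq \dim \sigma < \infty$, which forces the set $\{h : \dim \rho_h \geq 2\}$ to be finite.

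Under this tensor-product picture the $H$-action on $\widehat{N}_{\rm fd}$ reads $\sigma^h \cong \boxtimes_{h'} \rho_{h^{-1}h'}$, so the stabilizer $H_0 = \{h \in H : \sigma^h \cong \sigma\}$ preserves the finite set $F := \{h : \dim \rho_h \geq 2\}$ under left multiplication. Finiteness of the $H$-orbit of $\sigma$ forces $[H : H_0] < \infty$, hence $H_0$ is infinite whenever $H$ is; since left translation of $H_0$ on itself is free, the induced action on the finite set $F$ is free as well, forcing $F = \emptyset$. Thus $\sigma$ is a character and factors through the quotient $q_N : N \to N^{\rm Ab}$. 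This yields $C = \Bohr(q_N)^{-1}(C^{\rm Ab})$, where $C^{\rm Ab}$ denotes the analogous subgroup for $N^{\rm Ab} \rtimes H$, and therefore $\Bohr(q_N)$ descends to a topological isomorphism $\Bohr(N)/C \cong \Bohr(N^{\rm Ab})/C^{\rm Ab}$ that is equivariant for the $\Bohr(H)$-actions; combined with Theorem~\ref{Theo1} applied to both $\Lambda \wr H$ and $\Lambda^{\rm Ab} \wr H$, this identifies the two Bohr compactifications via the quotient map. The argument for $\Prof$ is identical using Theorem~\ref{Theo2}, since representations with finite image also factor through $N^{\rm Ab}$, and the perfect case is immediate because $\Lambda^{\rm Ab} = \{e\}$ gives $\Lambda^{\rm Ab} \wr H = H$. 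The main obstacle is the tensor-product decomposition of $\sigma$: one must justify passing to an infinite outer tensor product and control the dimensions uniformly over finite subsets of $H$.
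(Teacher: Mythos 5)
Your proposal is correct and follows essentially the same route as the paper: part (i) via triviality of $C$ and $D$ together with compatibility of $\Bohr$ and $\Prof$ with finite products, and part (ii) via the key claim that every $\sigma\in\widehat{N}_{\rm fd}^{H-{\rm per}}$ is one-dimensional, deduced from the isotypic/tensor decomposition of $\sigma$ over finite sub-sums and the finiteness of the ``non-abelian support''. The only (harmless) variations are that you derive the contradiction from the finite-index stabilizer acting freely on the finite set $F$, where the paper instead constructs infinitely many pairwise inequivalent conjugates $\sigma^{h_n}$ by choosing disjoint translates of $F_0$, and that you conclude by re-applying Theorems~\ref{Theo1} and~\ref{Theo2} to both wreath products rather than invoking Proposition~\ref{Pro-BohrQuotient} on the kernel of the quotient map.
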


 Item (ii) of the following result is Theorem 3.2 in  \cite{Gruenberg}.
\begin{corx}
\label{Cor4}
Let $\Lambda$ and $H$ be countable groups.
Assume that $\Lambda$ has at least two elements.
\begin{itemize}
\item[(i)] $\Lambda\wr H$ is MAP  if and only if  $\Lambda$ is MAP
and $H$ is  RF and if either $H$ is finite or $\Lambda$ is abelian.
\item[(ii)] \textbf{\cite{Gruenberg}} $\Lambda\wr H$ is RF  if and only if $\Lambda$ and $H$ are  both RF and if
either $H$ is finite  or $\Lambda$ is abelian.
\end{itemize}
\end{corx}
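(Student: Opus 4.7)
The plan is to apply Corollary 2 to the decomposition $G=\Lambda\wr H=N\rtimes H$ with $N=\bigoplus_{h\in H}\Lambda$: $G$ is MAP (resp.\ RF) iff $H$ is MAP (resp.\ RF) and $\widehat{N}_{\rm fd}^{H-{\rm per}}$ (resp.\ $\widehat{N}_{\rm finite}^{H-{\rm per}}$) separates the points of $N$.

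When $H$ is finite, every $H$-orbit in $\widehat{N}_{\rm fd}$ is finite, so the separation condition reduces to $N$ being MAP (resp.\ RF). Since $N$ is a finite direct product of copies of $\Lambda$, this is equivalent to $\Lambda$ having the property, and the conditions on $H$ are automatic.

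When $H$ is infinite, Corollary 3(ii) gives isomorphisms $\Bohr(G)\cong\Bohr(\Lambda^{\rm Ab}\wr H)$ and $\Prof(G)\cong\Prof(\Lambda^{\rm Ab}\wr H)$ induced by the quotient $G\twoheadrightarrow\Lambda^{\rm Ab}\wr H$, so $\beta_G$ and $\alpha_G$ factor through this quotient; their injectivity forces injectivity of $\Lambda\twoheadrightarrow\Lambda^{\rm Ab}$, i.e.\ $\Lambda$ is abelian. Assuming $\Lambda$ abelian, $N$ is abelian and Corollary 1 reduces the problem to the following key equivalence: $\widehat{N}^{H-{\rm per}}$ (resp.\ $\widehat{N}_{\rm fin}^{H-{\rm per}}$) separates the points of $N$ iff $\Lambda$ is MAP (resp.\ RF) and $H$ is RF.

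For sufficiency, given nontrivial $\lambda=(\lambda_h)\in N$ with finite support $S$, I pick $s_0\in S$ with $\lambda_{s_0}\ne 1$ and $\chi_0\in\widehat{\Lambda}$ (resp.\ $\widehat{\Lambda}_{\rm fin}$) with $\chi_0(\lambda_{s_0})\ne 1$; using $H$ RF, I choose a finite-index normal subgroup $K\trianglelefteq H$ such that the cosets $\{sK : s\in S\}$ are pairwise distinct, and define $\chi_h=\chi_0$ for $h\in s_0 K$ and $\chi_h=1$ otherwise. Then $K\subseteq\mathrm{Stab}_H(\chi)$, so $\chi$ has finite $H$-orbit, and by construction $\chi(\lambda)=\chi_0(\lambda_{s_0})\ne 1$. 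For necessity, separating $\delta_e\lambda_0$ from the identity (for nontrivial $\lambda_0\in\Lambda$) yields $\Lambda$ MAP (resp.\ RF); and for any $s\ne t$ in $H$, separating the element of $N$ equal to $\lambda_0$ at position $s$ and $\lambda_0^{-1}$ at position $t$ produces a $\chi\in\widehat{N}^{H-{\rm per}}$ with $\chi_s\ne\chi_t$. Since $\chi$ has finite $H$-orbit, its stabilizer contains a finite-index normal subgroup $K\trianglelefteq H$, which makes $h\mapsto\chi_h$ factor through $H/K$; hence $\chi_s\ne\chi_t$ yields $st^{-1}\notin K$, witnessing $H$ RF. This last step is the main obstacle: extracting residual finiteness of $H$ from a separation hypothesis phrased purely in terms of the abelian group $N$. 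The key observation is that finite $H$-orbit forces a character of $N$ to factor through a finite quotient of $H$, converting separation of points of $N$ into separation of elements of $H$ by finite-index normal subgroups.
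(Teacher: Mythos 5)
Your proposal is correct and follows the same overall route as the paper: reduce via Corollary~\ref{Cor2}, dispose of the finite-$H$ case directly (where the periodicity condition is vacuous and the conditions on $H$ are automatic), use Corollary~\ref{Cor3}(ii) to force $\Lambda$ abelian when $H$ is infinite, and then establish the key equivalence that, for $\Lambda$ abelian, $\widehat{N}^{H-{\rm per}}$ (resp.\ $\widehat{N}_{ \rm finite}^{H-{\rm per}}$) separates the points of $N$ if and only if $\Lambda$ is MAP (resp.\ RF) and $H$ is RF. Your sufficiency direction is the paper's second and third steps in all but notation (you concentrate the character at a single $s_0\in S$ instead of choosing one character per element of the support, which changes nothing). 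The one place where you genuinely diverge is the implication ``separation $\Rightarrow$ $H$ RF'': the paper converts separation into density of $\widehat{N}^{H-{\rm per}}$ in the compact dual $\widehat{N}$ and runs an $\varepsilon/2$-approximation of a character supported at a single point $h_0$, whereas you apply the separation hypothesis directly to the element of $N$ equal to $\lambda_0$ at $s$ and $\lambda_0^{-1}$ at $t$, extract a periodic character $\chi$ with $\chi_s\neq\chi_t$, and pass to the normal core of its finite-index stabilizer. Your version is more elementary --- it needs no duality between separation and density and no metric estimates --- while resting on the same mechanism as the paper's, namely that a finite $H$-orbit forces $h\mapsto\chi_h$ to factor through a finite quotient of $H$. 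Both arguments are valid; yours is arguably the cleaner write-up of this step.
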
 

\begin{remark}
\label{Rem1}
\begin{itemize}
\item[(i)]The Bohr compactification of an abelian locally compact group $A$  is easy to describe: $\Bohr(A)$  can be identified with $\widehat{\Gamma},$ where $\Gamma= \widehat{A}$ viewed as discrete group;
in case $A$ is finitely generated, a more precise description of $\Bohr(A)$  is available (see \cite[Proposition 11]{Bekka1}).
\item[(ii)] Provided $\Bohr(H)$ and  $\Prof(H)$ are known, Corollary~\ref{Cor3} together with Corollary~\ref{Cor1} 
give, in view of (i), a  complete description of the Bohr compactification and the profinite completion of \emph{any}
wreath product $\Lambda\wr H$ in case $H$ is infinite.
\item[(iii)] Bohr compactifications of group and semigroup  extensions have been studied by several authors,
in a  more abstract and less explicit  setting (\cite{Dangello}, \cite{Junghenn1}, \cite{Junghenn2}, \cite{Junghenn3}, \cite{Landstad}, \cite{Milnes}); profinite completions of group extensions appear  at numerous places
in the literature (\cite{Grunewald},  \cite{Ribes-Zal}).

\end{itemize}
 \end{remark} 
 
  This paper is organized as follows. Section~\ref{S:General} 
 contains some general facts about  Bohr compactifications and profinite completions
 as well as some reminders  on projective representations.
 In Section~\ref{S:ProofTheos}, we give the proof of
Theorems~\ref{Theo1} and \ref{Theo2}. Section~\ref{S:ProofCor}
contains the proof of the corollaries.
Section~\ref{S:Examples} is devoted to the explicit computation of the Bohr compactification
and profinite completions for two groups:  the lamplighter group 
$(\ZZ/n\ZZ) \wr \ZZ$ and the Heisenberg group $H(R)$ over an arbitrary commutative ring $R.$

\section{Preliminaries}
\label{S:General}
 \subsection{Models for Bohr compactifications and profinite completions}
 Let $G$ be a topological group. We give  well-known models for $\Bohr(G)$ and $\Prof(G).$
 For this, we  use finite dimensional  unitary representations of $G,$ 
 that is,   continuous homomorphisms $\pi: G\to U(n)$ for some integer $n\geq 1.$
  We denote by $\widehat{G}_{\rm fd}$ the set   of  equivalence  classes of irreducible 
   finite dimensional  unitary representations of $G.$
   Let  $\widehat{G}_{\rm finite}$ be 
   the subset of  $\widehat{G}_{\rm fd}$ consisting of 
   representations $\pi$ with finite image $\pi(G).$
   
  For a compact (respectively, profinite) group $K$, the set $\widehat{K}_{\rm fd}$
  (respectively, $\widehat{K}_{\rm finite}$)  coincides with the dual space $\widehat{K}$,
  that is,  the set of equivalence classes  of  unitary representations of $K.$
 
 A useful tool for the identification of $\Bohr(G)$ or $\Prof(G)$ is  given by the following proposition; for the easy proof,
see Propositions 5 and 6 in  \cite{Bekka1}.
 \begin{proposition}
 \label{GeneralBohr1}
\begin{itemize}
 \item[(i)]  Let $K$  be a compact group and $\beta: G\to K$ a continuous homomorphism 
 with dense image; then $(K, \beta)$ is a Bohr compactification of $G$
 if and only if the map $\widehat{\beta}:\widehat{K}\to \widehat{G}_{\rm fd},$ given by 
 $\widehat{\beta}(\pi)= \pi\circ \beta,$ is surjective.
 \item[(ii)] Let $L$ a be profinite group and $\alpha: G\to L$ a continuous homomorphism 
 with dense image; then $(L, \alpha)$ is a profinite completion of $G$
 if and only if the map $\widehat{\beta}:\widehat{L}\to \widehat{G}_{\rm finite},$ given by 
 $\widehat{\beta}(\pi)= \pi\circ \beta,$ is surjective.
  \end{itemize}
 \end{proposition}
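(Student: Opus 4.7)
The plan is to prove part (i); part (ii) is entirely parallel, and I comment on the differences at the end. For the forward direction ($\Rightarrow$), suppose $(K,\beta)$ is a Bohr compactification of $G$, and let $\pi \in \widehat{G}_{\rm fd}$, say $\pi : G \to U(n_\pi)$. Since $U(n_\pi)$ is compact, the universal property of $\Bohr(G)$ produces a continuous homomorphism $\tilde\pi : K \to U(n_\pi)$ with $\tilde\pi \circ \beta = \pi$. I would then verify that $\tilde\pi$ remains irreducible: for any closed subspace $V \subset \CCC^{n_\pi}$, the set $\{k \in K : \tilde\pi(k) V \subset V\}$ is a closed subgroup of $K$ containing $\beta(G)$, hence all of $K$; thus $V$ is $\tilde\pi(K)$-invariant iff it is $\pi(G)$-invariant. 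So $\tilde\pi \in \widehat{K}$ and $\widehat\beta(\tilde\pi) = \pi$, proving surjectivity.

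For the converse ($\Leftarrow$), I would verify the universal property directly by a graph-closure argument. Given a compact group $L$ and a continuous homomorphism $\alpha : G \to L$, form the closed subgroup
$$
M := \overline{\{(\beta(g), \alpha(g)) : g \in G\}} \subset K \times L,
$$
and let $p_1 : M \to K$, $p_2 : M \to L$ be the coordinate projections. Then $p_1$ is surjective, since $\beta(G) \subset p_1(M)$ is dense in $K$. The crucial point is that $p_1$ is also injective; granting this, $\alpha' := p_2 \circ p_1^{-1} : K \to L$ is a continuous homomorphism with $\alpha' \circ \beta = \alpha$, and uniqueness of $\alpha'$ is immediate from the density of $\beta(G)$.

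To prove injectivity of $p_1$, suppose $(e, \ell) \in M$, and pick a net $(g_i)$ in $G$ with $\beta(g_i) \to e$ and $\alpha(g_i) \to \ell$. By Peter--Weyl it suffices to show $\rho(\ell) = I$ for every $\rho \in \widehat{L}$. Now $\rho \circ \alpha : G \to U(n_\rho)$ is a finite-dimensional unitary representation of $G$, and decomposes up to unitary equivalence as $\bigoplus_j \pi_j$ with $\pi_j \in \widehat{G}_{\rm fd}$. By hypothesis each $\pi_j$ extends to some $\tilde\pi_j \in \widehat{K}$ with $\tilde\pi_j \circ \beta = \pi_j$, and assembling the $\tilde\pi_j$ (and conjugating by the intertwining unitary) yields a continuous $\sigma : K \to U(n_\rho)$ with $\sigma \circ \beta = \rho \circ \alpha$. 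Evaluating along the net then gives $I = \sigma(e) = \lim \rho(\alpha(g_i)) = \rho(\ell)$, as desired.

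Part (ii) follows by the same argument, using that a profinite group $L$ admits a faithful family of continuous homomorphisms to finite groups, so $\widehat{L}_{\rm finite} = \widehat{L}$; moreover, any extension $\tilde\pi \in \widehat{K}$ of some $\pi \in \widehat{G}_{\rm finite}$ automatically has finite image, because $\tilde\pi(K) = \overline{\tilde\pi(\beta(G))} = \overline{\pi(G)} = \pi(G)$, so the same construction stays within $\widehat{K}_{\rm finite}$. The main point requiring care — more a bookkeeping step than a genuine obstacle — is the reassembly in the injectivity argument: tracking the intertwining unitary that diagonalizes $\rho \circ \alpha$ so that the extensions $\tilde\pi_j$ glue into a well-defined $\sigma : K \to U(n_\rho)$. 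Once this is set up, both parts are complete.
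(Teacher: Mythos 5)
Your argument is correct and complete; the paper itself omits the proof of this proposition, citing Propositions 5 and 6 of \cite{Bekka1}, and your route (lifting irreducibles via the universal property for the forward direction, and the graph-closure subgroup $M\subset K\times L$ together with Peter--Weyl for the converse) is the standard argument that the reference follows. The two points you flag as needing care --- conjugating each lift $\tilde\pi_j$ so that $\tilde\pi_j\circ\beta=\pi_j$ holds on the nose before assembling $\sigma$, and the identity $\tilde\pi(K)=\overline{\pi(G)}=\pi(G)$ in the profinite case --- are exactly the right ones, and both are handled correctly.
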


The following proposition is an immediate consequence of Proposition~\ref{GeneralBohr1}.
\begin{proposition}
\label{Pro-General2}
Choose  families 
$$(\pi_i: G\to U(n_i))_{i\in I} \quad \text{and} \quad (\sigma_j: G\to U(n_j))_{j\in J}$$
  of representatives for the  sets $\widehat{G}_{\rm fd}$ and $\widehat{G}_{\rm finite},$ respectively.
\begin{itemize}
\item[(i)] Let  $\beta: G \to \prod_{i \in I} U(n_i)$ be given by $\beta(g)= \bigoplus_{i \in I} \pi_i(g) $
and let $K$ be the closure of $\beta(G)$. Then $(K, \beta)$ is a Bohr compactification of $G.$
\item[(ii)]  Let  $\alpha: G \to \prod_{j \in J} U(n_j)$ be given by $\alpha(g)= \bigoplus_{j \in J} \sigma_j(g) $
and let $L$ be the closure of $\alpha(G)$. Then $(L, \alpha)$ is a profinite completion of $G.$
\end{itemize}
\end{proposition}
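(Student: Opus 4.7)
The plan is to reduce everything to Proposition~\ref{GeneralBohr1} by verifying its hypotheses for the pairs $(K,\beta)$ and $(L,\alpha)$. For part (i), I first note that $\prod_{i\in I} U(n_i)$ is compact by Tychonoff's theorem, so its closed subgroup $K$ is compact. The map $\beta$ is a continuous homomorphism into this product because each coordinate $\pi_i$ is one, and $\beta(G)$ is dense in $K$ by definition. Thus the general setup of Proposition~\ref{GeneralBohr1}(i) applies, and the only thing to check is that the pull-back map $\widehat{\beta}: \widehat{K} \to \widehat{G}_{\rm fd}$ is surjective.

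For surjectivity, I would start with an arbitrary $\pi \in \widehat{G}_{\rm fd}$, pick the index $i \in I$ with $\pi \cong \pi_i$, and consider the $i$-th coordinate projection $p_i: \prod_{j \in I} U(n_j) \to U(n_i)$. Its restriction $\rho_i := p_i|_K$ is a continuous finite-dimensional unitary representation of $K$ satisfying $\rho_i \circ \beta = \pi_i$. The key point is that $\rho_i$ is irreducible: since $\beta(G)$ is dense in $K$ and $\rho_i$ is continuous, the commutants of $\rho_i(K)$ and of $\rho_i(\beta(G)) = \pi_i(G)$ in $U(n_i)$ coincide, so irreducibility of $\pi_i$ forces irreducibility of $\rho_i$. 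Hence $\rho_i \in \widehat{K}$ and $\widehat{\beta}(\rho_i) = \pi_i \cong \pi$, giving surjectivity.

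For part (ii), the argument runs in parallel, with one additional point at the start: one must verify that $L$ is profinite rather than merely compact. This follows because each $\sigma_j$ has finite image $\sigma_j(G) \subset U(n_j)$, so $\alpha(G)$ lies in the profinite group $\prod_{j \in J} \sigma_j(G)$, and $L$, being a closed subgroup of a profinite group, is itself profinite. Once this is in hand, the same irreducible-pull-back construction as in (i), applied to an arbitrary $\sigma \in \widehat{G}_{\rm finite}$ and the appropriate coordinate projection, produces an irreducible unitary representation of $L$ pulling back to $\sigma$; since $L$ is profinite, $\widehat{L} = \widehat{L}_{\rm finite}$, so Proposition~\ref{GeneralBohr1}(ii) applies and we conclude.

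The only step that requires a touch of care, rather than being purely formal, is the irreducibility of $\rho_i$ (and its analogue for $L$); everything else is a direct verification of the hypotheses of Proposition~\ref{GeneralBohr1}. I expect this to be the main (and essentially only) non-bookkeeping point of the argument.
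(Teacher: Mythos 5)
Your proposal is correct and matches the paper's intent: the paper gives no written proof beyond declaring the statement an immediate consequence of Proposition~\ref{GeneralBohr1}, and your verification (compactness of $K$, density, irreducibility of the restricted coordinate projections via the commutant/density argument, and profiniteness of $L$ from its embedding in $\prod_j \sigma_j(G)$) is exactly the routine check being left to the reader.
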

 
We observe that a more common for a profinite completion of $G$ is  the projective limit
   $\varprojlim G/H$, where $H$ runs over the  family of the normal subgroups of finite index of $G,$
together with the natural homomorphism $G \to \varprojlim G/H$ (see e.g. \cite[2.1.6]{Ribes-Zal})
\subsection{Extension of representations}
\label{SS:Extension}
We will also use the notion of a projective  representation.
Let $G$ be a locally compact group.  A  map $\pi: G \to U(n)$
is  a \textbf{projective representation} of $G$ if the following holds:
\begin{itemize}
 \item $\pi(e)=I$,
\item for all $g_1,g_2\in G,$ there exists  $c(g_1 , g_2 )\in {\mathbf S}^1 $
such that  $$\pi(g_1 g_2 ) = c(g_1 , g_2 )\pi(g_1 )\pi(g_2 ),$$
\item $\pi$ is  measurable.
\end{itemize}
The map  $c:G \times G \to {\mathbf S}^1$ is a $2$-cocycle with values in the unit cercle ${\mathbf S}^1.$
The conjugate representation $\overline{\pi}: G\to U(n)$ is 
another projective representation defined by $\overline{\pi}(g)= J\pi(g) J,$
where $J: \CCC^n\to \CCC^n$ is the anti-linear map given by conjugation of the coordinates,

The proof of the following  lemma  is straightforward.
\begin{lemma}
\label{Lem-TensorConj}
Let $\pi: G\to U(n)$ be a projective representation of $G,$ with associated  cocycle
$c:G \times G \to {\mathbf S}^1$.
Let  $\pi': G\to U(m)$ be another projective representation of $G$
with  associated  cocycle $2$-cocycle $c':G \times G \to {\mathbf S}^1$.
\begin{itemize}
\item[(i)] $\overline{\pi}: G\to U(n)$ is  a projective representation  of $G$ with  $\overline{c}$ as  associated  cocycle.
\item[(ii)] The tensor product 
$$\pi\otimes \pi': G\to U(nm), \qquad g\mapsto \pi(g)\otimes \pi'(g)$$ is  a projective representation of $G$ with $cc'$
as  associated  cocycle.
\end{itemize}
\end{lemma}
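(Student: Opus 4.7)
The proof will be a direct verification, checking the three defining conditions of a projective representation for each of $\overline{\pi}$ and $\pi\otimes\pi'$. Since the lemma is labelled straightforward, I expect no real obstacle; the only points worth recording are the two small algebraic identities that make the cocycles come out as $\overline{c}$ and $cc'$.

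For part (i), the plan is to exploit that the conjugation map $J:\CCC^n\to\CCC^n$ is an involution (so $J^2=I$) and antilinear (so $J(\lambda v)=\overline{\lambda}Jv$ for $\lambda\in\CCC$). First I would record $\overline{\pi}(e)=J\pi(e)J=JIJ=I$. Then for $g_1,g_2\in G$, I would compute
\[
\overline{\pi}(g_1g_2)=J\pi(g_1g_2)J=J\bigl(c(g_1,g_2)\pi(g_1)\pi(g_2)\bigr)J,
\]
and use antilinearity of $J$ to pull out the scalar as $\overline{c(g_1,g_2)}$, then insert $I=J^2$ between $\pi(g_1)$ and $\pi(g_2)$ to rewrite the expression as $\overline{c}(g_1,g_2)(J\pi(g_1)J)(J\pi(g_2)J)=\overline{c}(g_1,g_2)\overline{\pi}(g_1)\overline{\pi}(g_2)$. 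Measurability of $\overline{\pi}$ is inherited from $\pi$ since $A\mapsto JAJ$ is a continuous (indeed, isometric) self-map of $U(n)$.

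For part (ii), the plan is to use the mixed-product identity $(A\otimes B)(C\otimes D)=AC\otimes BD$ together with the bilinearity of the tensor product with respect to scalars. At the identity, $(\pi\otimes\pi')(e)=I\otimes I=I$. For the cocycle relation, I would expand
\[
(\pi\otimes\pi')(g_1g_2)=\pi(g_1g_2)\otimes\pi'(g_1g_2)=\bigl(c(g_1,g_2)\pi(g_1)\pi(g_2)\bigr)\otimes\bigl(c'(g_1,g_2)\pi'(g_1)\pi'(g_2)\bigr),
\]
pull both scalars out as $c(g_1,g_2)c'(g_1,g_2)$, and then regroup using the mixed-product identity to recover $(cc')(g_1,g_2)\,(\pi\otimes\pi')(g_1)(\pi\otimes\pi')(g_2)$. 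Measurability of $\pi\otimes\pi'$ follows from that of $\pi$ and $\pi'$ since the tensor product $U(n)\times U(m)\to U(nm)$ is continuous. The only real content of the whole lemma is that conjugation inverts the cocycle (giving $\overline{c}$) while tensoring multiplies cocycles (giving $cc'$); there is no substantive obstacle.
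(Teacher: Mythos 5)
Your verification is correct, and it is exactly the routine check the paper has in mind: the author states only that the proof is straightforward and omits it, and your two computations (antilinearity of $J$ plus $J^2=I$ extracting $\overline{c}$, and the mixed-product identity extracting $cc'$) are the standard argument. Nothing is missing; the measurability remarks are the right way to dispose of the third axiom.
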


Let $N$ be a closed normal subgroup of $G.$  Recall that the stabilizer ${G}_\pi$ in $G$ of  an irreducible unitary representation $\pi$ of $N$ is the set of $g\in G$ such that  $\pi^g$ is equivalent to $\pi.$
Observe that  $G_\pi$ contains $N.$

The following proposition is a well-known  fact from the  Clifford-Mackey 
theory  of unitary representations of group extensions (see  \cite[Chap.1,  \S 11]{Curtis-Reiner} and \cite{Mackey-Acta}).
\begin{proposition}
\label{Pro-Ext} 
Let $G= N\rtimes H$ be the semi-direct product of the locally compact groups $H$ and $N$.
Let $\pi: N\to U(m)$ be an irreducible unitary representation $\pi$ of $N$
and assume that $G= G_\pi.$
There exists a projective representation $\widetilde\pi: G\to U(m)$
with the following properties:
\begin{itemize}
\item  $\widetilde\pi$ extends $\pi$, that is, $\widetilde\pi(n)= \pi(n)$ for 
every $n\in N;$
\item  the  $2$-cocycle $\widetilde{c}:G \times G\to {\mathbf S}^1$ associated to 
$\widetilde\pi$ has the form  $\widetilde{c}=c\circ (p\times p),$
for a map $c: H\times H\to {\mathbf S}^1$, where $p: G \to H$ is the canonical
homomorphism.
\end{itemize}
\end{proposition}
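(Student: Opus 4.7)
The plan is the classical Clifford--Mackey construction, adapted to the split setting where the intertwining data live on $H$. First I build intertwining unitaries $V(h)$ using Schur's lemma, then assemble them with $\pi$ via the splitting $H \hookrightarrow G$ to form $\widetilde\pi$, and finally read off the shape of the cocycle.

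Since $G=G_\pi$, for every $h\in H$ the conjugate $\pi^h$ is equivalent to $\pi$; irreducibility of $\pi$ and Schur's lemma then produce a unitary $V(h)\in U(m)$, unique up to a scalar in $\mathbf{S}^1$, satisfying
$$V(h)\pi(n)V(h)^{-1}=\pi(hnh^{-1}) \tout n\in N,$$
and I normalize $V(e)=I$. The class $[V(h)]$ in the projective unitary group $PU(m)=U(m)/\mathbf{S}^1$ depends continuously on $h$, and since $U(m)\to PU(m)$ is a principal $\mathbf{S}^1$-bundle it admits a Borel section; composing with $h\mapsto [V(h)]$ produces a Borel lift $V:H\to U(m)$ with the prescribed intertwining property. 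This measurable selection is the only genuinely delicate point of the argument.

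Using the unique decomposition $g=nh$ with $n\in N$, $h\in H$ furnished by $G=N\rtimes H$, I define $\widetilde\pi(nh):=\pi(n)V(h)$. This map is measurable and manifestly extends $\pi$. For two elements $n_ih_i$, the intertwining relation rewrites as $V(h_1)\pi(n_2)=\pi(h_1n_2h_1^{-1})V(h_1)$, hence
$$\widetilde\pi(n_1h_1)\widetilde\pi(n_2h_2)=\pi\bigl(n_1(h_1n_2h_1^{-1})\bigr)\,V(h_1)V(h_2),$$
while the product in $G=N\rtimes H$ yields
$$\widetilde\pi\bigl((n_1h_1)(n_2h_2)\bigr)=\pi\bigl(n_1(h_1n_2h_1^{-1})\bigr)\,V(h_1h_2).$$
Now both $V(h_1)V(h_2)$ and $V(h_1h_2)$ intertwine $\pi$ with $\pi^{h_1h_2}$ in the same way, so a final application of Schur's lemma produces a scalar $c(h_1,h_2)\in\mathbf{S}^1$ with $V(h_1)V(h_2)=c(h_1,h_2)V(h_1h_2)$. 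Consequently the cocycle $\widetilde c$ of $\widetilde\pi$ depends only on $(p(g_1),p(g_2))$, which is precisely $\widetilde c=c\circ(p\times p)$. Apart from the Borel selection, every verification is a direct computation from Schur's lemma and the semidirect product structure.
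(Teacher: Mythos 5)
Your proposal is correct and follows essentially the same route as the paper: choose the intertwiners $V(h)$ via Schur's lemma and a Borel section of $U(m)\to PU(m)$ (the paper's Borel transversal $S$), set $\widetilde\pi(nh)=\pi(n)V(h)$, and compute that the resulting cocycle depends only on the $H$-components. The only cosmetic difference is that you justify measurability by the continuity of $h\mapsto[V(h)]$ into $PU(m)$, where the paper simply cites Mackey.
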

\begin{proof} Let $S\subset U(m)$ be a Borel transversal for 
the quotient space $PU(m)= U(m)/\mathbf{S}^1$
with $I_n\in S.$
Let $h\in H$. Since $G=G_\pi$ and since $\pi$ is irreducible,
 there exists a unique    matrix $\widetilde\pi(h)\in S$  such that
$$
\pi (hn h^{-1})= \widetilde\pi(h) \pi(n) \widetilde\pi(h)^{-1} \tout n\in N.
$$
Define $\widetilde\pi: G\to U(n)$ by 
$$
\widetilde\pi(nh)= \pi(n)\widetilde\pi(h)  \tout  n\in N, h\in H.
$$
It is clear that $\widetilde{\pi}|_N= \pi$ and that
$$\pi (gng^{-1})= \widetilde\pi(g) \pi(n) \widetilde\pi(g)^{-1} \tout g\in G, n\in N.$$
It can be shown (see proof of Theorem 8.2 in \cite{Mackey-Acta})
that $\widetilde{\pi}$ is a measurable map.

Let $g_1,g_2\in G$. For every $n\in N,$ we have, on the one hand,
$$
\pi (g_1g_2n g_2^{-1}g_1)=\widetilde\pi(g_1 g_2)\pi(n) \widetilde\pi(g_1 g_2)^{-1}
$$
and on the other hand
\[
\begin{aligned}
\pi (g_1g_2n g_2^{-1}g_1)&= \widetilde\pi(g_1) \pi(g_2 n g_2^{-1}) \widetilde\pi(g_1)^{-1}\\
&=\widetilde\pi(g_1) \widetilde\pi(g_2)\pi(n)\widetilde\pi (g_1)^{-1} \widetilde\pi (g_2)^{-1}.
\end{aligned}
\]
Since  $\pi$ is irreducible,
it follows that 
$$\widetilde\pi(g_1 g_2)=\widetilde{c}(g_1, g_2) \widetilde\pi (g_1)\widetilde\pi(g_2)$$
for some scalar $\widetilde{c}(g_1, g_2)\in {\mathbf S}^1$.

Moreover, for $g_1= n_1h_1, g_2= n_2 h_2,$ we have, on the one hand,
\[
 \begin{aligned}
 \widetilde\pi(g_1 g_2)&= \widetilde{c}(g_1, g_2) \widetilde\pi (g_1)\widetilde\pi(g_2)\\
 &= \widetilde{c}(n_2 h_1, n_2 h_1) \pi(n_1)\widetilde\pi(h_1) \pi(n_2)\widetilde\pi(h_2)
 \end{aligned}
 \]
  and, on the other hand,
 \[
 \begin{aligned}
  \widetilde\pi(g_1 g_2)&=\widetilde\pi(n_1(h_1n_2 h_1^{-1}) h_1h_2)\\
 & =\pi(n_1(h_1n_2 h_1^{-1})) \widetilde \pi(h_1h_2)\\
  &=\pi(n_1) \pi(h_1n_2 h_1^{-1}) \widetilde \pi(h_1h_2)\\
  &=\pi(n_1) \widetilde\pi(h_1) \pi(n_2) \widetilde\pi(h_1)^{-1} \widetilde \pi(h_1h_2)\\
  &=\widetilde{c}(h_1, h_2) \pi(n_1) \widetilde\pi(h_1) \pi(n_2) \widetilde\pi(h_1)^{-1} \widetilde \pi(h_1)\widetilde \pi(h_2)\\
  &=\widetilde{c}(h_1, h_2)\pi(n_1) \widetilde\pi(h_1) \pi(n_2)\widetilde \pi(h_2);
 \end{aligned}
 \]
 this shows  that $\widetilde{c}(n_2 h_1, n_2 h_1)=\widetilde{c}(h_1, h_2).$
\end{proof}

\subsection{ Bohr compactification and profinite completion of quotients}
\label{SS:BohrProf}
Let $G$ be a topological group and $N$ a closed normal subgroup of $G.$
Let $(\Bohr(G), \beta_G)$ and  $(\Prof(G), \alpha_G)$ be a Bohr compactification 
and a profinite completion of $G.$ 
Let $\Bohr(p): \Bohr(G)\to \Bohr(G/N)$ and  $\Prof(p): \Bohr(G)\to \Bohr(G/N)$
be the morphisms induced by  the canonical epimorphism $p: G\to G/N$.
The  following proposition is well-known (see \cite[Lemma 2.2]{Hart-Kunen} or  \cite[Proposition 10]{Bekka1} 
for (i)  and  \cite[Proposition 3.2.5]{Ribes-Zal} for (ii)).
For the convenience of the reader, we give  for (ii) a proof which is different from the one in \cite{Ribes-Zal}
\begin{proposition}
\label{Pro-Rightexact}
\label{Pro-BohrQuotient}
\begin{itemize}
\item[(i)] $\Bohr(p)$ is surjective and its kernel is  $\overline{\beta_G(N)}$.
\item[(ii)] $\Prof(p)$ is surjective and its kernel is  $\overline{\alpha_G(N)}$
\end{itemize}
\end{proposition}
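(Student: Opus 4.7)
The plan is to treat (i) and (ii) in parallel, exploiting the representation-theoretic characterization of Bohr compactifications and profinite completions supplied by Proposition~\ref{GeneralBohr1}. Surjectivity of $\Bohr(p)$ and $\Prof(p)$ is essentially formal: from the intertwining identities $\Bohr(p)\circ\beta_G=\beta_{G/N}\circ p$ and $\Prof(p)\circ\alpha_G=\alpha_{G/N}\circ p$, the image of $\Bohr(p)$ contains $\beta_{G/N}(G/N)$ and is closed (continuous image of a compact set), so equals $\Bohr(G/N)$; likewise for $\Prof(p)$. The inclusion $\overline{\beta_G(N)}\subseteq\Ker\Bohr(p)$ (and its analogue for $\Prof$) is immediate from these same identities applied on $N$, together with the fact that kernels are closed.

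The substantive point is the reverse inclusion. For (i), I would introduce $\widetilde{K}:=\Bohr(G)/\overline{\beta_G(N)}$ with quotient map $q:\Bohr(G)\to\widetilde{K}$, observe that $q\circ\beta_G$ kills $N$ and therefore descends to a continuous homomorphism $\widetilde\beta:G/N\to\widetilde{K}$ with dense image, and then show that the pair $(\widetilde{K},\widetilde\beta)$ is itself a Bohr compactification of $G/N$. By uniqueness, there is a topological isomorphism $f:\Bohr(G/N)\to\widetilde{K}$; comparing $f\circ\Bohr(p)$ with $q$ on the dense subset $\beta_G(G)$ shows they coincide, whence $\Ker\Bohr(p)=\Ker q=\overline{\beta_G(N)}$.

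To establish the universal property of $(\widetilde{K},\widetilde\beta)$, I would invoke the criterion of Proposition~\ref{GeneralBohr1}(i): it suffices to show every $\sigma\in\widehat{(G/N)}_{\rm fd}$ factors as $\pi\circ\widetilde\beta$ for some $\pi\in\widehat{\widetilde{K}}$. Given such $\sigma:G/N\to U(n)$, the composition $\sigma\circ p$ is a finite-dimensional unitary representation of $G$ and hence, by the universal property of $\Bohr(G)$, extends to a representation $\widetilde{\sigma\circ p}:\Bohr(G)\to U(n)$. This extension is trivial on $\beta_G(N)$ by construction and therefore, by continuity, on $\overline{\beta_G(N)}$; it descends to the required $\pi:\widetilde{K}\to U(n)$, and $\pi\circ\widetilde\beta=\sigma$ follows from surjectivity of $p$.

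For (ii) the argument is formally identical after replacing $\Bohr$, $\beta$, $\widehat{(\cdot)}_{\rm fd}$ by $\Prof$, $\alpha$, $\widehat{(\cdot)}_{\rm finite}$, and invoking Proposition~\ref{GeneralBohr1}(ii). The only extra check is that when $\sigma\in\widehat{(G/N)}_{\rm finite}$, the representation $\sigma\circ p$ has finite image and so extends through $\Prof(G)$ rather than merely through $\Bohr(G)$; this is immediate since $\sigma(G/N)=\sigma(p(G))$ is already finite. I do not foresee a genuine obstacle: once Proposition~\ref{GeneralBohr1} is in hand, the proof reduces to assembling the extension-and-factorization diagram cleanly, and the main conceptual content is recognizing that the universal property transfers from $\Bohr(G)$ to the quotient $\widetilde{K}$.
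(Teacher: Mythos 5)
Your proposal is correct and follows essentially the same route as the paper: both pass to the quotient $\Bohr(G)/\overline{\beta_G(N)}$ (resp.\ $\Prof(G)/\overline{\alpha_G(N)}$), show it inherits the universal property of $\Bohr(G/N)$ (resp.\ $\Prof(G/N)$), and conclude by uniqueness. The only cosmetic difference is that you verify the universal property of the quotient via the dual-map surjectivity criterion of Proposition~\ref{GeneralBohr1}, while the paper checks it directly by a diagram chase with the abstract universal property; both come down to the same factorization of $\sigma\circ p$ through the quotient.
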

\begin{proof}
To show (ii),  set $K:=\overline{\alpha_G}(N)$.
Let   $(\Prof(G/N),\overline{\alpha})$ be a Bohr compactification of $G/N.$
We have a commutative  diagram 
\[
\begin{tikzcd}
G\arrow{r}{p}\arrow{d}{\alpha_G}&G/N  \arrow{d}{\overline{\alpha}}\\
\Prof(G)  \arrow{r}{\Prof(p)} & \Prof(G/N)
\end{tikzcd}
\]
It follows that $\alpha_G(N)$ and hence $K$ is contained in $\Ker(\Prof(p)).$
So, we have  induced homomorphisms $\beta: G/N\to   \Prof(G)/K$
and $\beta': \Prof(G)/K\to \Prof(G/N),$
giving rise to a  commutative diagram
\[
\begin{tikzcd}
&G/N  \arrow{dl}{\beta}\arrow{d}{\overline{\alpha}} \\
 \Prof(G)/K \arrow{r}{\beta'} & \Prof(G/N) \\
\end{tikzcd}
\] 
It follows that $(\Prof(G)/K, \beta)$ has the same  universal property 
for $G/N$ as $(\Prof(G/N),\overline{\alpha})$; it is therefore a profinite completion of $G/N.$
\end{proof}

 \section{Proof of Theorems~\ref{Theo1} and \ref{Theo2}}
 \label{S:ProofTheos}
 \subsection{Proof of Theorem~\ref{Theo1}}
 \label{SS:ProofTheo1}
  Set $K:= \overline{\beta_G(N)},$ where $\beta_G$ is the canonical map 
 from the locally compact group $G= N\rtimes H$ to $\Bohr(G).$

 \vskip.2cm
  $\bullet$ {\it First step.} We claim that 
  $$
 \left\{\widehat{\sigma}\circ (\beta_G|_N): \widehat{\sigma} \in \widehat{K}\right\} \subset \widehat{N}_{ \rm fd}^{H-{\rm per}}.
  $$
  Indeed, let  $\widehat{\sigma} \in \widehat{K}$.
Then  $\sigma:= \widehat{\sigma}\circ (\beta_G|_N)\in \widehat{N}_{ \rm fd}.$
Let $ \widehat{\rho}\in \widehat{\Bohr(G)}$ be an irreducible subrepresentation of the induced representation 
  $\Ind_{K}^{\Bohr(G)} \widehat{\sigma}.$ Then, by Frobenius reciprocity, $\widehat{\sigma}$ is equivalent to a subrepresentation
  of $\widehat{\rho}|_K.$ Hence, $\sigma$ is equivalent to a subrepresentation of 
   $(\widehat{\rho} \circ \beta_G)|N.$
  The decomposition of  the finite dimensional representation $(\widehat{\rho} \circ \beta_G)|_N$ into isotypical components
  shows that $\sigma$ has a finite $H$-orbit (see \cite[Proposition 12]{Bekka1}).
  
  \vskip.2cm
  $\bullet$ {\it Second step.} 
  We claim that 
  $$
 \widehat{N}_{ \rm fd}^{H-{\rm per}} \subset \left\{\widehat{\sigma} \circ (\beta_G|_N): \widehat{\sigma} \in \widehat{K}\right\}.
  $$
Indeed, let  $\sigma: N\to U(m)$ be a  representation of $N$ with finite $H$-orbit. 
 By Proposition~\ref{Pro-Ext}, there exists a \emph{projective} representation $\widetilde{\sigma}$ of $G_\sigma=NH_\sigma$ which extends $\sigma$ and  the associated cocycle $c: G_\sigma\times G_\sigma\to \mathbf{S}^1$,
 factorizes through $H_\sigma\times H_\sigma$.
 
 Define a projective representation  $\tau: G_\sigma\to U(m)$ of  $G_\sigma$ by 
 $$ 
 \tau(hn)=  \overline{\widetilde{\sigma}}(h) \tout nh\in NH_\sigma.
 $$
 Observe that $\tau$ is  trivial on $N$ and that its associated cocycle is $\overline{c}.$ 
Consider the  tensor product representation   $\widetilde{\sigma}\otimes \tau$ of $G_\sigma.$ 
 Lemma~\ref{Lem-TensorConj} shows that $\widetilde{\sigma}\otimes \tau$ is a projective representation for the  cocyle
 $c\overline{c}=1.$ So, $\widetilde{\sigma}\otimes \tau$ is a measurable homomorphism
 from $G_\sigma\to U(m).$ This implies that $\widetilde{\sigma}\otimes \tau$ is continuous 
 (see \cite[Lemma A.6.2]{BHV}) and so $\widetilde{\sigma}\otimes \tau$ is
 an \emph{ordinary}  representation of $G_\sigma$.

  It is clear that $\widetilde{\sigma}\otimes \tau$ is finite dimensional. Observe that   the restriction $(\widetilde{\sigma}\otimes \tau)|_N$ of $\widetilde{\sigma}\otimes \tau$ to $N$ is a multiple of $\sigma.$
  Let 
  $$\rho:=\Ind_{G_\sigma}^G (\widetilde{\sigma}\otimes \tau).$$
  Then $\rho$  is  finite dimensional, since $\widetilde{\sigma}\otimes \tau$ is finite dimensional
and $G_\sigma$ has finite index in $G$.
As $G_\sigma$ is open in $G,$ $\widetilde{\sigma}\otimes \tau$ is equivalent to a subrepresentation of the restriction  $\rho|_{G_\sigma}$ of $\rho$ to $G_\sigma$ (see e.g. \cite[1.F]{BH}); consequently, $\sigma$ is equivalent to a subrepresentation
of $\rho|_{N}$. Since $\rho$ is a finite dimensional unitary representation of $G,$
there exists a unitary representation $\widehat{\rho}$ of $\Bohr(G)$
such that $\widehat{\rho}\circ \beta_G= \rho.$
So, $\sigma$ is equivalent to a subrepresentation
of $(\widehat{\rho}\circ \beta_G)|_N$, that is, there exists
a   subspace $V$ of the space of $\widehat{\rho}$ which is invariant under $\beta_G(N)$
and defining a representation of $N$ which is equivalent to $\sigma.$
Then $V$ is invariant under $K=\overline{\beta_G(N)}$ and defines therefore
an irreducible  representation $\widehat{\sigma}$ of $K$ for which
$\widehat{\sigma}\circ (\beta_G|_N)= \sigma$ holds.

\vskip.2cm
Let $\vfi_N: \Bohr(N)\to K=\overline{\beta_G(N)}$ be the homomorphism such that $\vfi_N\circ \beta_N= \beta_G|_N$.

\vskip.2cm
  $\bullet$ {\it Third step.} We claim that 
$$
\Ker \vfi_N =\bigcap_{\sigma \in \widehat{N}_{ \rm fd}^{H-{\rm per}}} \Ker (\Bohr({\sigma})),
$$
where  $\Bohr({\sigma})$ is the representation of $\Bohr(N)$ such that  $\Bohr({\sigma})\circ \beta_N= \sigma.$

Indeed, by the first and second steps, we have
$$
\widehat{N}_{ \rm fd}^{H-{\rm per}}=\left\{\widehat{\sigma} \circ (\beta_G|_N): \widehat{\sigma} \in \widehat{K}\right\}=
\left\{(\widehat{\sigma} \circ \vfi_N) \circ \beta_N: \widehat{\sigma} \in \widehat{K}\right\};
$$
since obviously $\widehat{\sigma} \circ \vfi_N= \Bohr({\sigma})$ for $\sigma= \widehat{\sigma} \circ \vfi_N,$
it follows that
\[
\begin{aligned}
\bigcap_{\sigma \in \widehat{N}_{ \rm fd}^{H-{\rm per}}} \Ker (\Bohr({\sigma}))&=
 \bigcap_{\widehat{\sigma} \in \widehat{K}} \Ker (\widehat{\sigma} \circ \vfi_N).
 \end{aligned}
 \]
As   $\vfi_N(\Bohr(N))=K$ and $\widehat{K}$ separates the points of $K,$ we have 
$\bigcap_{\widehat{\sigma} \in \widehat{K}} \Ker (\widehat{\sigma} \circ \vfi_N)=\Ker \vfi_N$
and the claim is proved.

\vskip.2cm

Set $L:=  \overline{\beta_G(H)}.$

  $\bullet$ {\it Fourth step.} We claim that the map $\vfi_H: \Bohr(H)\to L,$
  defined by the relation $\vfi_H\circ \beta_H= \beta_G|_H,$ is an isomorphism.
  Indeed, the canonical isomorphism $H\to G/N$ induces an isomorphism $\Bohr(H) \to \Bohr(G/N)$.
  Using  Proposition~\ref{Pro-Rightexact}.i., we obtain  a continuous epimorphism  
  $$f:L\to \Bohr(H)$$
  such that  $f(\beta_G(h))= \beta_H(h)$ for all $h\in H.$
  Then $\vfi_H\circ f$ is the identity on $\beta_G(H)$ and hence on $L,$ by density. This implies that 
  $f$ is an isomorphism.

\vskip.2cm
Observe   that, by the universal property of $\Bohr(N),$
  every element $h\in H$ defines   a continuous automorphism $\theta_b(h)$ of $\Bohr(N) $ 
 such that 
 $$\theta_b(h)(n)= \beta_N(hnh^{-1})\tout n\in N.$$ 
   The  corresponding homomorphism $\theta_b: H\to  \Aut(\Bohr(N))$
   defines an action of $H$ on the compact group $\Bohr(N).$
   By duality, we have an action, still denoted by $\theta_b,$ of $H$
   on  $\widehat{\Bohr(N)}$
 and we have 
   $$
   \Bohr(\sigma^h)= \theta_b(h)(\Bohr(\sigma))  \tout \sigma \in \widehat{N}_{ \rm fd}, h\in H.
   $$
 This implies that the normal subgroup
   $$
\Ker \vfi_N=\bigcap_{\sigma \in \widehat{N}_{ \rm fd}^{H-{\rm per}}} \Ker (\Bohr(\sigma)).
$$
of $\Bohr(N)$ is $H$-invariant. We have therefore an induced action $\overline{\theta_b}$ of $H$ on 
$\Bohr(N)/\Ker\vfi_N.$
Observe that the isomorphism 
$$\Bohr(N)/\Ker\vfi_N\to K$$ 
induced by $\vfi_N$ is $H$-equivariant for $\overline{\theta_b}$ and the action
of $H$ on $K$ given by conjugation with $\beta_G(h)$ for $h\in H.$

$\bullet$ {\it Fifth step.} We claim that   the action $\overline{\theta_b}$  induces an action of 
$\Bohr(H)$ by automorphisms on  $\Bohr(N)/\Ker \vfi_N$ and that the map
$$(\Bohr(N)/\Ker\vfi_N) \rtimes \Bohr(H)\to \Bohr(G), (x \Ker\vfi_N, y) \mapsto \vfi_N(x)\vfi_H(y)$$
is an isomorphism.

Indeed, $\overline{\beta_G(N)}$ is a normal subgroup of $\Bohr(G)$
and so  $\overline{\beta_G(H)}$ acts by conjugation  on $K.$
By the third and the forth step, the maps   
$$\overline{\vfi_N}: \Bohr(N)/\Ker \vfi_N  \to K, \qquad x \Ker\vfi_N\mapsto \vfi_N(x)$$ 
and 
$$ \vfi_H:\Bohr(H)\to L$$
are  isomorphisms. We define an action 
$$\widehat{\theta}: \Bohr(H) \to \Aut(\Bohr(N)/\Ker \vfi_N)$$
by $$\widehat{\theta}(y)(x \Ker\vfi_N)= (\overline{\vfi_N})^{-1} \left(\vfi_H(y) \vfi_N(x) \vfi_H(y)^{-1}\right)$$
for $x\in \Bohr(N)$ and  $y\in \Bohr(H).$
The claim follows.

 \subsection{Proof of Theorem~\ref{Theo2}}
 The proof is similar to the proof of  Theorem~\ref{Theo1}. The role of   $\widehat{N}_{ \rm fd}$ is now played by  the space $\widehat{N}_{ \rm finite}$ of finite dimensional irreducible representations of $N$ with finite image. 
 We will go quickly through the steps of the proof of  Theorem~\ref{Theo1}; at some places (especially the second step)
 there will be a few crucial changes and new arguments which we will emphasize.

 Set $L:= \overline{\alpha_G(N)},$ where $\alpha_G: G \to \Prof(G)$
 is the canonical map.  Observe that $L$ is profinite.
 
 \vskip.2cm
  $\bullet$ {\it First step.} We claim that $
 \left\{\widehat{\sigma}\circ (\alpha_G|_N): \widehat{\sigma} \in \widehat{L}\right\} \subset \widehat{N}_{ \rm finite}^{H-{\rm per}}.
  $
  Indeed, let  $\widehat{\sigma} \in \widehat{L}$.
Then  $\sigma:= \widehat{\sigma}\circ (\alpha_G|_N)\in \widehat{N}_{ \rm finite},$
since $L$ is profinite.
Let $ \widehat{\rho}$ be an irreducible subrepresentation of 
  $\Ind_{L}^{\Prof(G)} \widehat{\sigma}.$ Since $\Prof(G)$ is compact, $\widehat{\rho}$ 
  is finite dimensional. Since $\sigma$ is equivalent to a subrepresentation of 
   $\widehat{\rho} \circ (\alpha_G)|N)$, it has therefore a finite $H$-orbit.

   \vskip.2cm
  $\bullet$ {\it Second step.} 
  We claim that $
 \widehat{N}_{ \rm finite}^{H-{\rm per}} \subset \left\{\widehat{\sigma} \circ (\alpha_G|_N): \widehat{\sigma} \in \widehat{L}\right\}.
  $
Indeed, let  $\sigma: N\to U(m)$ be an irreducible representation with finite image.
 By Proposition~\ref{Pro-Ext}, there exists a projective representation $\widetilde{\sigma}$ of $G_\sigma=NH_\sigma$ which extends $\sigma$ and  the associated cocycle $c: G_\sigma\times G_\sigma\to \mathbf{S}^1$,
 factorizes through $H_\sigma\times H_\sigma$.
 We need to show that we can choose $\widetilde{\sigma}$ so that $\widetilde{\sigma}(G_\sigma)$ is finite. 
 
 Choose a  projective representation  $\widetilde{\sigma}: G_\sigma \to U(m)$ as above and modify $\widetilde{\sigma}$  as follows:  define
 $$\widetilde{\sigma}_1(hn)= \dfrac{1}{(\det \widetilde{\sigma}(h))^{1/m}} \widetilde{\sigma}(h) \sigma(n)
 \tout  h\in H_\sigma, n\in H.
 $$
 Then $\widetilde{\sigma}_1$ is again a projective representation of  $G_\sigma=NH_\sigma$ which extends $\sigma$ and  the associated cocycle $c: G_\sigma\times G_\sigma\to \mathbf{S}^1$ factorizes through $H_\sigma\times H_\sigma$;
 moreover, $\widetilde{\sigma}_1(h)\in SU(m)$ for every $h\in H_\sigma.$
 
 Every $h\in  H_\sigma$  induces  a  bijection $\vfi_h$ of  $\sigma(N)$
 given by 
 $$\vfi_h: \sigma(n) \mapsto \widetilde{\sigma}_1(h)\sigma(n) \widetilde{\sigma}_1(h)^{-1}= \sigma (h nh^{-1}) \tout n\in N.$$
  So, we have a map
 $$
\vfi:\widetilde{\sigma}_1(H_\sigma)\to\Sym(\sigma(N)), \quad \widetilde{\sigma}_1(h)\mapsto\vfi_h
$$
where $\Sym(\sigma(N))$ is the set of bijections of $\sigma(N).$
 For $h_1, h_2\in H_\sigma,$ we have $\vfi_{h_1}= \vfi_{h_2}$ if and only if 
$\widetilde{\sigma}_1(h_2)= \lambda \widetilde{\sigma}_1(h_1)$ for some scalar
$ \lambda \in \mathbf{S}^1,$ by irreducibility of $\sigma$.
Since $\det(\widetilde{\sigma}_1(h_1))=1$ and  $\det (\widetilde{\sigma}_1(h_2))=1,$
it follows that $\la$ is a $m$-th root of unity. 
This shows that the fibers of the  map $\vfi$ are finite. 
Since $\sigma(N)$ is finite, $\Sym(\sigma(N))$ and  hence $\widetilde{\sigma}_1(H_\sigma)$  is finite.
It follows that $\widetilde{\sigma}_1(G_\sigma)= \widetilde{\sigma}_1(H_\sigma)\sigma(N)$ is finite.
 
Let  $\tau: G_\sigma\to U(m)$ be the projective representation of  $G_\sigma$ given by 
 $$ 
 \tau(hn)=  \overline{\widetilde{\sigma}_1}(h) \tout nh\in NH_\sigma.
 $$
Then $\widetilde{\sigma}_1\otimes \tau$ is a ordinary  representation of $G_\sigma$
and has finite image.
  The induced representation $\rho:=\Ind_{G_\sigma}^G (\widetilde{\sigma}_1\otimes \tau)$
  has finite image,  since $G_\sigma$ has finite index in $G$.
As $\widetilde{\sigma}_1\otimes \tau$ is equivalent to a subrepresentation of the restriction  $\rho|_{G_\sigma}$
of $\rho$ to $G_\sigma,$ the representation
 $\sigma$ is equivalent to a subrepresentation
of $\rho|_{N}$. Since $\rho(G)$ has finite image,
there exists a unitary representation $\widehat{\rho}$ of $\Prof(G)$
such that $\widehat{\rho}\circ \alpha_G= \rho.$
So, there exists
a   subspace $V$ of the space of $\widehat{\rho}$ which is invariant under $\alpha_G(N)$
and defining a representation of $N$ which is equivalent to $\sigma.$
Then $V$  defines 
an irreducible  representation $\widehat{\sigma}$ of $L$ for which
$\widehat{\sigma}\circ (\alpha_G|_N)= \sigma$ holds.

\vskip.2cm
Let $\psi_N: \Prof(N)\to L$ be the homomorphism such that $\psi_N\circ \alpha_N= \alpha_G|_N$.

\vskip.2cm
  $\bullet$ {\it Third step.} We claim that 
$$
\Ker \psi_N =\bigcap_{\sigma \in \widehat{N}_{ \rm finite}^{H-{\rm per}}} \Ker (\Prof(\sigma)).
$$
Indeed, the proof is similar to the proof of the third step of Theorem~\ref{Theo1}

\vskip.2cm
  $\bullet$ {\it Fourth step.} We claim that the map $\psi_H: \Prof(H)\to \overline{\alpha_G(H)},$
  defined by the relation $\vfi_H\circ \alpha_H= \alpha_G|_H,$ is an isomorphism.
  Indeed,  the proof is similar to the proof of the fourth step  of Theorem~\ref{Theo1}.
   
\vskip.2cm
Every element $h\in H$ defines   a continuous automorphism $\theta_p(h)$ of $\Prof(N).$ 
 Let  $$\theta_p: H\to  \Aut(\Prof(N))$$ be the corresponding homomorphism;
 as in Theorem~\ref{Theo1}, we have an  induced action $\overline{\theta_p}$ of $H$ on 
$\Prof(N)/\Ker\psi_N.$

$\bullet$ {\it Fifth step.} We claim that   the action $\overline{\theta_p}$ of $H$  induces an action of 
$\Prof(H)$ by automorphisms on  $\Prof(N)/\Ker \psi_N$ and that the map
$$\left(\Prof(N)/\Ker\psi_N\right) \rtimes \Prof(H)\to \Prof(G), (x \Ker\psi_N, y) \mapsto \psi_N(x)\psi_H(y)$$
is an isomorphism.

Indeed,  the proof is similar to the proof of the fifth step  of Theorem~\ref{Theo1}.  

 \label{SS:ProofTheo2}
 \section{Proof of the Corollaries}
 \label{S:ProofCor}
 \subsection{Proof of Corollary~\ref{Cor0}}
 Assume that $N$ is finitely generated. In view of Theorem~\ref{Theo2},
 we have to show that  $\widehat{N}_{ \rm finite}^{H-{\rm per}}=  \widehat{N}_{ \rm finite}.$
 
 It is well-known that, for every integer $n\geq1,$
 there are only finitely many subgroups of index $n$ in $N.$ Indeed, since $N$ is finitely generated,
 there are only finitely many actions of $N$ on the set $\{1, \dots, n\}.$
Every subgroup $M$ of index $n$ defines an action of $N$ on $N/M$ and hence
on $\{1, \dots, n\}$ for which the stabilizer of, say, 1 is $M.$
So, there  are only finitely many such subgroups $M.$

Let $\sigma \in \widehat{N}_{ \rm finite}$ and set $n:=  |\sigma(N)|.$
Consider $N_\sigma= \cap_{ M} M,$
where $M$ runs over the subgroups of $N$ of index $n.$
Then $N_\sigma$ is a normal subgroup of $N$ of finite index and, for every $h\in H,$
the representation  $\sigma^h$ factorizes to a representation of  $N/N_\sigma.$
Since $N/N_\sigma$ is a  finite group, it has only finitely many non equivalent irreducible representations and the
claim is proved.
 \subsection{Proof of Corollary~\ref{Cor1}}
 We assume that $N$ is abelian.
 The dual group of $\Bohr(N)$ is $\widehat{N}$  and the dual of $\Prof(N)$ is  
 $\widehat{N}_{\rm fin}$, viewed as discrete groups.
 With the notation as in Theorems~\ref{Theo1} and ~\ref{Theo2}, the subgroups
 $C$ and $D$ are respectively the annihilators in $\Bohr(N)$ and in $\Prof(N)$ of the closed 
 subgroups  $\widehat{N}^{H-{\rm per}}$ and  $\widehat{N}_{ \rm finite}^{H-{\rm per}}$.
 Hence,  $\Bohr(N)/C$ and $\Prof(N)/D$
 are the dual groups of $\widehat{N}^{H-{\rm per}}$ and  $\widehat{N}_{ \rm finite}^{H-{\rm per}}$,
 viewed as discrete groups.
So, the claim follows from Theorems~\ref{Theo1} and ~\ref{Theo2}.

 \subsection{Proof of Corollary~\ref{Cor2}}
 In view of  Theorems~\ref{Theo1} and ~\ref{Theo2},
  $G$ is MAP, respectively RF, if and only if
 $$\Ker (\vfi_N \circ \beta_N)=\{e\} \qquad \text{and} \qquad \Ker (\vfi_H \circ \beta_H)=\{e\},$$
 respectively
 $$\Ker (\psi_N \circ \alpha_N)=\{e\}=\{e\} \qquad \text{and} \qquad \Ker (\psi_H \circ \alpha_H)=\{e\}.$$
 So, 
  $G$ is MAP, respectively RF, if and only if
 $$\beta_N^{-1}(C)=\{e\} \qquad \text{and} \qquad \Ker ( \beta_H)=\{e\},$$
 respectively
 $$\alpha_N^{-1}(D)=\{e\} \qquad \text{and} \qquad \Ker (\alpha_H)=\{e\}.$$
 This exactly means that $G$ is MAP, respectively RF, if and only if
 $\widehat{N}_{ \rm fd}^{H-{\rm per}}$ separates the points of $N$ and $H$ is MAP,
 respectively $\widehat{N}_{ \rm finite}^{H-{\rm per}}$ separates the points of $N$ and $H$ is RF.

  \subsection{Proof of Corollary~\ref{Cor3}}
 We assume that  $G= \Lambda\wr H$ is the wreath product of the countable groups  $\Lambda$ and $H$
 and set $N:=\oplus_{h\in H}\Lambda.$

\noindent
\vskip.2cm
\noindent 
(i) Assume that $H$ is finite. Then, of course, 
$\widehat{N}_{\rm fd}^{H-{\rm per}}=\widehat{N}_{\rm fd}$ and $\widehat{N}_{ \rm finite}^{H-{\rm per}}
=\widehat{N}_{\rm finite};$
so, the subgroups  $C$ and $D$ from Theorems~\ref{Theo1} and ~\ref{Theo2} are trivial. 
Since $\Bohr(N)=\oplus_{h\in H}\Bohr(\Lambda)$ and $ \Prof(N)= \oplus_{h\in H}\Prof(\Lambda),$
we have
\[\begin{aligned}
&\Bohr(\Lambda\wr H)\cong \left( \oplus_{h\in H}\Bohr(\Lambda)\right) \rtimes \Bohr(H) \ \text{and}\\
&\Prof(\Lambda\wr H)\cong \left( \oplus_{h\in H}\Prof(\Lambda)\right) \rtimes \Prof(H).
\end{aligned}
\]

\noindent
\vskip.2cm 
\noindent 
(ii) Assume that $H$ is infinite.

\vskip.2cm
  $\bullet$ {\it First step.}  We claim that, for every $\sigma \in \widehat{N}_{\rm fd}^{H-{\rm per}},$  
  we  have $\dim \sigma =1$, that is, $\sigma (N)\subset U(1)=\mathbf{S}^1.$
  
Indeed, assume by contradiction that $\dim \sigma >1$.
Let $\mathcal{F}$ be the family of finite subsets
of $H.$ For every  $F\in \mathcal{F},$ let $N(F)$ be  the  normal subgroup
of $N$ given by 
 $$N(F):=\oplus_{h\in F}\Lambda$$
The restriction $\sigma|_{N(F)}$  of $\sigma$ to  $N(F)$  has a  decomposition into isotypical components:
 $$
 \sigma|_{N(F)} = \oplus_{\pi\in  \Sigma_F} n_\pi \pi,
 $$
 where $\Sigma_F$ is a (finite) subset of  $ \widehat{N(F)}_{\rm fd}$ 
 and the $n_\pi$'s some positive integers.
 As is well-known (see, e.g., \cite[\S 17]{Weil}), every representation in $\widehat{N(F)}_{\rm fd}$ 
 is a tensor product  $\otimes_{h\in F}\rho_h$ of irreducible representations $\rho_h$ of 
$\Lambda$ ; so, we can view $\Sigma_F$ as subset of $\prod_{h\in F}\widehat{\Lambda}_{\rm fd}.$ 
If $F\subset F',$ then the obvious map $\prod_{h\in F'}\widehat{\Lambda}_{\rm fd}\to \prod_{h\in F}\widehat{\Lambda}_{\rm fd}$
restricts to a surjective map $\Sigma_{F'} \to \Sigma_F$.

Since $\dim \sigma$ is finite, it follows that there exists $F_0\in \mathcal{F}$  such that
$$\dim \pi =1 \tout \pi\in \Sigma_F,  F\in \mathcal{F} \quad \text{with} \quad
F\cap F_0=\emptyset
$$
and 
$$\dim\pi_0>1 \quad \text{for some}\quad\pi_0\in\Sigma_{F_0}.$$
For $h\in H$ and $F\in \mathcal{F},$ 
observe that for the decomposition of $\sigma^h|_{N( h^{-1}F)}$ into isotypical 
components, we have
$$\sigma^{h}|_{N(h^{-1}F)}= \oplus_{\pi\in  \Sigma_{F}} n_\pi \pi.$$
So, $\sigma^{h}$ and $\sigma$ are not equivalent if 
$h^{-1}F_0\cap F_0=\emptyset.$

We choose  inductively a sequence $(h_n)_{n\geq 0}$ of elements in  $H$  by $h_0=e$ 
and 
$$h_{n+1}^{-1} F_{0} \cap \bigcup_{0\leq m\leq n} h_{m}^{-1} F_{0} =\emptyset \tout n \geq 0.$$
The $\sigma^{h_n}$'s are then pairwise not equivalent.
This is a contradiction, since  $\sigma \in \widehat{N}_{\rm fd}^{H-{\rm per}}.$

\vskip.3cm
Let $p:\Lambda\wr H\to \Lambda^{\rm Ab}\wr H$ be the quotient map, which is
given by 
$$
p((\lambda_a)_{a\in H},  h)= ((\lambda_a [\Lambda, \Lambda])_{a\in H},  h).
$$

\vskip.2cm
  $\bullet$ {\it Second step.}  
  We claim that the induced maps 
  $$\Bohr(p):  \Bohr(\Lambda\wr H)\to \Bohr( \Lambda^{\rm Ab}\wr H)$$
  and
  $$
  \Prof(p): \Prof(\Lambda\wr H)\to \Prof( \Lambda^{\rm Ab}\wr H)
  $$
are isomorphisms.

Indeed, by the first step, every $\sigma \in \widehat{N}_{\rm fd}^{H-{\rm per}}$ factorizes through 
$ N^{\rm Ab}.$ Hence, by Theorems~\ref{Theo1} and \ref{Theo2},
 $ [N, N]$ is contained in $C=\ker \vfi_N$ and $ [N, N]$ is contained in $D=\ker \psi_N$.
 This means that $\beta_G(\ker p)= \{e\}$ and  $\alpha_G(\ker p)= \{e\}$.
 The claim follows then from Proposition~\ref{Pro-BohrQuotient}.

 \subsection{Proof of Corollary~\ref{Cor4}}
 We assume that  $G$ is a wreath product 
 $G= \Lambda\wr H$
 and  that $\Lambda$ has at least two elements.
 As before, we set $N=\oplus_{h\in H}\Lambda .$

\vskip.2cm
 \noindent
 (i) Assume that $H$ is finite. Then $G$ is MAP (respectively RF)
if and only if  $\Lambda$ is MAP (respectively RF).

Indeed, $\widehat{N}_{ \rm fd}^{H-{\rm per}}= \widehat{N}_{ \rm fd}$
separates the points of $N.$ The claim follows then from  Corollary~\ref{Cor2}.

\vskip.2cm
 \noindent
(ii) Assume that $H$ is infinite.
If $G$ is MAP, then $\Lambda$ is abelian, by Corollary~\ref{Cor3}.
So, we may   and  will from now assume  that  $\Lambda$ (and hence $N$) is abelian.

 \vskip.2cm
  $\bullet$ {\it First step.}  We claim that, if  $\widehat{N}^{H-{\rm per}}$ separates the points of $N$,
then  $H$ is  RF.

Indeed, recall that the dual group $\widehat{\Lambda}$ of $\Lambda,$
equipped with the topology of pointwise convergence, is a compact group.
The dual group $\widehat{N}$ of $N$ can be identified with, as topological group,
with the  product  group $\prod_{h\in H}\widehat{\Lambda},$
endowed with the product topology, by means of the duality 
$$
\left \langle \prod_{h\in H}\chi_h, \oplus_{h\in H}\lambda_h\right\rangle= \prod_{h\in H}\chi_h(\lambda_h) 
\quad \text{for all} \quad 
\prod_{h\in H}\chi_h\in \widehat{N}, \oplus_{h\in H}\lambda_h\in N.
$$
 (Observe that the product on the right hand side   is well-defined
 since $\lambda_h=e$ for all but finitely many $h\in H.$)
 The dual action  of $H$ on $\widehat{N}$ is given by 
  $$ 
  (\prod_{h\in H}\chi_h)^a = \prod_{h\in H}\chi_{a^{-1}h} \tout a\in H.
 $$
 For $\Phi:=\prod_{h\in H}\chi_h \in \widehat{N}$, we have that 
 $\Phi\in \widehat{N}^{H-{\rm per}}$
  if and only if there exists a finite index subgroup $H_\Phi$ of $H$ such that 
 $$\chi_{ah}=\chi_h \tout a\in H_\Phi, h\in H.$$
  
  By assumption,  $\widehat{N}^{H-{\rm per}}$ separates the points of $N$ ; 
  equivalently,  $\widehat{N}^{H-{\rm per}}$  is dense in $\widehat{N}$.
  Let $h_0\in H\setminus \{e\}.$
  Since $\Lambda$ has at least two elements, we can find $\chi^0\in \widehat{\Lambda}$ 
  and $\lambda_0\in \Lambda$ with $\chi^0(\lambda_0)\neq 1.$
  Define $\Phi_0= \prod_{h\in H}\chi_h\in  \widehat{N}$
  by $\chi_{h_0}= \chi^0$ and $\chi_h= 1_{\Lambda}$
  for $h\neq h_0.$ Set 
 $$\eps:= \dfrac{1}{2} \left|\chi^0(\lambda_0)-1\right|>0.$$
 Since  $\widehat{N}^{H-{\rm per}}$  is dense in $\widehat{N}$,
 we can find $\Phi'= \prod_{h\in H}\chi_h'\in  \widehat{N}^{H-{\rm per}}$
 such that 
 $$
 | \chi_{h_0}'(\lambda_0)- \chi_{h_0}(\lambda_0)| \leq  \eps/2 
 \quad \text{and}\quad  | \chi_{e}'(\lambda_0)- \chi_{e}(\lambda_0)|\leq \eps/2
 $$
 We claim that $h_0$ does not belong to the stabilizer $H_{\Phi'}$ of $\Phi'.$
 Indeed, assume by contradiction that $h_0\in H_{\Phi'}.$
 Then $\chi_{h_0}'= \chi_{e}'$ and hence
 \[
\begin{aligned}
2\eps&=  |\chi^0(\lambda_0)-1|\\
&\leq |\chi^0(\lambda_0)-\chi_{h_0}'(\lambda_0)|+ |\chi_{h_0}'(\lambda_0)- 1|\\
 &=|\chi_{h_0}(\lambda_0)-\chi_{h_0}'(\lambda_0)|+  | \chi_{e}'(\lambda_0)- \chi_{e}(\lambda_0)| \\
 &\leq \eps
 \end{aligned}
\]
and this is a contradiction.
Since $H_{\Phi'}$ has finite index, we have proved that  $H$ is RF.

\vskip.2cm
  $\bullet$ {\it  Second  step.}  We claim that, if  $H$ is  RF, 
  then $\widehat{N}^{H-{\rm per}}$ separates the points of $N$.

Indeed, let $\oplus_{h\in H}\lambda_h\in N\setminus\{e\}.$
Then  $F=\{h\in H : \lambda_h\neq e\}$ is a finite and non-empty subset of $H.$
Let $(\chi^0_h)_{h\in F}$ be a sequence in $ \widehat{\Lambda}$   such that 
$\prod_{h\in F} \chi^0_h(\lambda_h) \neq 1$ (this is possible, since 
abelian groups are MAP).
Since $H$ is RF, we can find a subgroup of finite index $L$ of
$H$ so that $Lh \neq Lh'$ for all $h, h'\in F$ with $h\neq h'.$
Define $\Phi=  \prod_{h'\in H}\chi_{h'}\in  \widehat{N}$
by 
$$
\chi_{h'}=\begin{cases}
\chi^0_h& \text{if}\quad h'\in  Lh \quad \text{for some} \quad h\in F\\
1_{\Lambda}& \text{if}\quad  h'\notin \cup_{h\in F} Lh
\end{cases}
$$ 
It is clear that $L\subset H_{\Phi}$ and hence that 
$\Phi\in \widehat{N}^{H-{\rm per}}$; moreover,
$$\Phi\left(\oplus_{h\in H}\lambda_h \right)= \prod_{h\in F} \chi^0_h(\lambda_h) \neq 1.$$
So, $\widehat{N}^{H-{\rm per}}$ separates the points of $N.$

 \vskip.2cm
  $\bullet$ {\it  Third  step.}  We claim that, if  $H$ is  RF  and $\Lambda$ is RF,
  then $\widehat{N}_{\rm finite}^{H-{\rm per}}$ separates the points of $N$.
  
  The proof is  the same as the proof of the second step, with only one difference:
  one has to choose a sequence a sequence  $(\chi^0_h)_{h\in F}$  in 
  $\chi^0_h\in  \widehat{\Lambda}_{\rm finite}$ such that
  $\prod_{h\in F} \chi^0_h(\lambda_h) \neq 1$; this is possible, since we are assuming that  $\Lambda$ is RF.
 
 \vskip.2cm
  $\bullet$ {\it  Fourth  step.}  We claim that $G$ is MAP  if and only if  $H$ is  RF.
  Indeed, this follows from Corollary~\ref{Cor2}, combined with the first  and second steps.

 \vskip.2cm
  $\bullet$ {\it  Fifth  step.}  We claim that $G$ is RF  if and only if  $\Lambda$ and $H$ are  RF
  Indeed, this follows from Corollary~\ref{Cor2}, combined with the first and third steps.

 \section{Examples}
 \label{S:Examples}
 \subsection{Lamplighter group}
 \label{SS:Lamplighter}
 For $m\geq 1,$ denote by $C_m$  the finite cyclic group $\ZZ/m\ZZ.$
 Recall that 
 $$\Bohr(\ZZ) \cong \Bohr(\Ga)_0 \oplus \Prof(\Ga).$$
and that 
$$\Prof(\ZZ)= \varprojlim_{m} C_m \quad \text{and} \quad \Bohr(\ZZ)_0\cong \prod_{ \omega \in \frak{c}} \AA/\QQ,$$
where  $\AA/\QQ$ is the ring of adeles of $\QQ$ and $\frak{c}=2^{\aleph_0}$
 (see \cite[Proposition 11]{Bekka1}).
 
For an integer $n_0\geq 2,$ let  $G= C_{n_0} \wr \ZZ$ be the lamplighter group.
We claim that
$$
\Bohr(G)\cong \Bohr(\ZZ)_0 \times \Prof(G)
$$
and 
$$
\Prof(G)= \varprojlim_{m}  C_{n_0} \wr  C_m.
$$
Indeed, let  $N:=\oplus_{k\in \ZZ}C_{n_0}.$
It will be convenient to describe $N$ as the set  of maps $f: \ZZ\to C_{n_0}$
such that $\supp (f):=\{k\in \ZZ\ : f(k)\neq 0\}$ is at most finite.
The action of $m\in \ZZ$ on $f\in N$ is given 
by translation: $f^m(k)= f(k+m)$ for all $k\in \ZZ.$

We identify $\widehat{\Lambda}$ with  the group $\mu_{n_0}$
of $n_0$-th roots of unity in $\CCC$ by means of the duality 
$$\langle z, k\ZZ \rangle=  z^k \tout z\in \mu_{n_0}, k\in \ZZ.
$$
Then $\widehat{N}$ can be identified with 
the set of maps $\Phi: \ZZ\to \mu_{n_0},$ with duality given by
$$\langle \Phi, f \rangle=  \prod_{k\in \ZZ}\langle \Phi(k), f(k) \rangle \tout \Phi\in \widehat{N},  f\in N.$$
Observe that  $\Phi(N) \subset \mu_{n_0}$ and so  $\widehat{N}=\widehat{N}_{\rm finite}.$

We have $\widehat{N}^{H-{\rm per}}=\bigcup_{m\geq 1}\widehat{N}(m),$
where $\widehat{N}(m)$ is the subgroup
$$
 \widehat{N}(m)=\left \{\Phi: \ZZ\to \mu_{n_0}\ : \Phi (k+m)= \Phi(k) \quad \text{for all} \quad  k\in \ZZ\right\}.
$$
Observe that we have natural injections $i_{m_2}^{m_1}:  \widehat{N}(m_2) \to  \widehat{N}(m_1)$
if $m_1$ is a multiple of $m_2.$
The dual group $A(m)$ of $\widehat{N}(m)$ can be identified with 
the set of maps $\overline{f}:C_m\to C_{n_0}$ by means of the duality
 $$
 \langle \overline{f}, \Phi \rangle=  \prod_{k+m\ZZ\in C_m} \Phi(k)^{\overline{f}(k+m\ZZ)} \rangle \tout \Phi\in \widehat{N}(m),
 \overline{f}\in A(m).
$$
If $m_1$ is a multiple of $m_2,$ we have a projection 
$p_{m_1}^{m_2}:  A(m_1) \to A(m_1)$ given by 
$$\langle p_{m_1}^{m_2} (\overline{f}), \Phi\rangle=
 \langle \overline{f}, \Phi\circ i_{m_2}^{m_1} \rangle.$$
The dual group $A$ of $\widehat{N}^{H-{\rm per}}=\bigcup_{m\geq 1}\widehat{N}(m)$
can  then be identified with the projective limit $\varprojlim_{m} A(m)$.

The action of $ \ZZ$ by automorphisms of  $A$ is given, for $r\in \ZZ$ and
$\overline{f}=(\overline{f}_m)_{m\geq 1}\in A$ by $(\overline{f})^r= (\overline{g}_m)_{m\geq 1},$
where 
$$ \overline{g}_m(k+m\ZZ)= \overline{f}_m(k+r+m\ZZ) \tout k\in \ZZ.$$
This action extends to an action of ${\Proj}(\ZZ) = {\varprojlim}_m C_m$
by automorphisms on $A$ in an obvious way. 
By Corollary ~\ref{Cor1},  the group
$\Prof(G)$ is isomorphic to the corresponding semi-direct product $A \rtimes \Prof (\ZZ)$ and 
hence
$$
\Prof(G) \cong  \varprojlim_m  C_{n_0} \wr  C_m.
$$
By Corollary ~\ref{Cor1} again, the action of $\ZZ$ on $A$ extends to an 
action by automorphisms of $\Bohr(\ZZ).$
Since $\Bohr(\ZZ)_0$ is connected and $A$ is totally disconnected,
$\Bohr(\ZZ)_0$  acts as the identity on $A.$
Since $\Bohr(\ZZ)\cong \Bohr(\ZZ)_0 \times \Proj(\ZZ),$
it  follows that 
$$\Bohr(G) \cong  (A\rtimes \Proj(\ZZ)) \times \Bohr(\ZZ)_0 \cong {\Proj}(G) \times \Bohr(\ZZ)_0.$$

For another description of    $\Prof(G),$ see \cite[Lemma 3.24]{GrigorchukEtAl}.
\subsection{Heisenberg group}
 \label{SS:Heisenberg}
 Let $R$ be a commutative unital ring. 
The Heisenberg group   is the group 
 \[
H(R):=\left\{\left(
\begin{array}{ccc}
 1&a&c\\
 0&1&b\\
0&0&1\
\end{array}
\right): a,b, c\in R\right\}
\]
We can and will  identify $H(R)$ with $R^3$, 
equipped with the group law 
$$
(a,b,c) (a', b', c') \, = \, (a + a', b + b', c + c' + ab').
$$
Let $ {\mathcal I}_{\rm finite}$ be the family of \emph{ideals}  of $R$ with \emph{finite} index.
Every ideal $I$ from  ${\mathcal I}_{\rm finite}$ defines two compact groups $H(\Bohr(R), I)$
and $H(\Prof(R), I)$  of  Heisenberg type as follows: 
$$H(\Bohr(R),I):= \Bohr(R)\times \Bohr(R)\times (R/I)$$ is equipped with the group law 
$$
(x,y,z) (x', y', z') \, = \, (x + x', y + y', z + z' + p_I(x)p_I(y'),
$$
where $p_I: \Bohr(R)\to R/I$ is the group homomorphism induced by the canonical
map $R\to R/I;$ the group $H(\Prof(R),I)$ is defined in a similar way.

Observe  that, for two ideals $I$ and $J$ in  ${\mathcal I}_{\rm finite}$, we have
 natural epimorphisms 
 $$H(\Bohr(R),J)\to H(\Bohr(R),I) \qquad{and} \qquad H(\Prof(R),J)\to H(\Prof(R),I).$$

We claim that  the canonical maps $H(R)\to H(\Bohr(R),I)$ 
and $H(R)\to H(\Prof(R),I)$ induce   isomorphisms
$$
\Bohr(H(R))\cong \varprojlim_{I}  H(\Bohr(R), I),
$$
and 
$$
\Prof(H(R))\cong \varprojlim_{I}  H(\Prof(R), I),
$$
where $I$ runs over $ {\mathcal I}_{\rm finite}.$

Indeed, $H(R)$ is a semi-direct product 
$N\rtimes  H$ for  
$$N = \{ (0,b,c) : b, c \in R \} \cong R^2$$
and
$$H = \{ (a,0,0) : a \in R \} \cong R.$$

Let $\chi\in \widehat{N}$. Then $\chi = \chi_{\beta, \psi}$
for a unique pair $(\beta, \psi) \in (\widehat R)^2$,
where $\chi_{\beta, \psi}$ is defined by 
$$
\chi_{\beta, \psi}(0,b,c) \, = \, \beta(b) \psi(c)
\hskip.5cm \text{for} \hskip.2cm
b, c \in R.
$$ 
For $h = (a,0,0) \in H$, we have
$$
\chi_{\beta, \psi}^{h}(0,b,c) \, = \, \beta(b) \psi(a^{-1}b) \psi(c)
\, = \, \chi_{\beta \psi^a, \psi} (0, b, c)
\hskip.5cm \text{for} \hskip.2cm
b, c \in R ,
$$
where $\psi^a \in \widehat R$ is defined by $\psi^a(b) = \psi(a^{-1}b)$ for $b \in R$.
It follows that the $H$-orbit of $\chi_{\beta, \psi}$ is
$$
\{\chi_{\beta \psi^a, \psi} : a \in R \},
$$
and that the stabilizer of $\chi_{\beta, \psi}$, which only depends on $\psi$, is 
$$
H_\psi \, = \, \{ (a, 0, 0) \mid a \in I_\psi \},
$$
where $I_\psi$ is the ideal of $R$ defined by 
$$
I_\psi \, = \, \{ a \in R \mid aR \subset \ker \psi \}.
$$
Let ${\widehat R}_{\rm per}$ be the subgroup of  all $\psi \in\widehat R$
which factorizes through a quotient $R/I$ for an ideal $I\in {\mathcal I}_{\rm finite}.$
It follows that 
$${\widehat N}^{H-{\rm per}}=
\{\chi_{\beta, \psi} :
 \beta \in \widehat R, \psi \in {\widehat R}_{\rm per} \} \cong  {\widehat R} \times  {\widehat R}_{\rm per}.
$$
 The dual group of  ${\widehat R}_{\rm per}$ can be identified with $ \varprojlim_{I} R/I$,
 where $I$ runs over ${\mathcal I}_{\rm finite}.$
 So, the dual group $A$ of ${\widehat N}^{H-{\rm per}}$ can be identified
 with  $ \varprojlim_{I}  \Bohr(R) \times (R/I).$
 
 The action of $\Bohr(H)\cong \Bohr(R)$  on every $\Bohr(R) \times (R/I)$ is given by 
 $$
 x \cdot (y,z)=(y, z + p_I(x)p_I(y')) \tout x,y \in \Bohr(R), z\in R/I,
 $$
 for the natural map $p_I: \Bohr(R)\to R/I$.
 This shows that 
 $$\Bohr(H(R))\cong \varprojlim_{I}  H(\Bohr(R), I).$$
 
 Similarly, the dual group $B$ of ${\widehat N_{\rm finite}}^{H-{\rm per}}$ can be identified
 with  $ \varprojlim_{I}  \Prof(R) \times (R/I)$ and we have
 $$\Prof(H(R))\cong \varprojlim_{I}  H(\Prof(R), I).$$

\end{document}